\documentclass[12pt]{article}

\newtheorem{lemma}{Lemma}
\newtheorem{theorem}{Theorem}

\newtheorem{remark}{Remark}

\usepackage{amsmath,amssymb,ascmac,color,ulem}

\begin{document}
\title{ 
 Flat coordinates of Frobenius prepotentials  related with 
the reflection groups of types $H_3$ and $H_4$}

\author{Rei Aradachi\thanks{Institute of Science and Engineering, Kanazawa University, Japan.\\
\hspace{0.5cm}
Email address: reiaradachi@gmail.com} \and 
Hiromasa Nakayama\thanks{College of Bioresource Sciences, Nihon University, Japan.\\
\hspace{0.5cm}
Email address:nakayama.hiromasa@nihon-u.ac.jp} \and
Jiro Sekiguchi\thanks{Tokyo University of Agriculture and Technology, Japan.\\
\hspace{0.5cm}
Email address: sekiguti@cc.tuat.ac.jp
}}

\maketitle

\begin{abstract}
In this article, we first explain a group  theoretic  interpretation of the
derivation of the relation
between 
the flat coordinates of the polynomial prepotential $(H_3)$
and those of the algebraic prepotential $(H_3)'$ given in \cite{KMS2}
constructed by M. Feigin, D. Valeri and J. Wright \cite{FVW}.
By the same idea explained in the case of $(H_3)$,
we will show a relation between the flat coordinates
of the polynomial prepotential $(H_4)$ and
those of the algebraic prepotential $H_4(9)$ given in \cite{Se}.
\end{abstract}

\begin{flushright}



\end{flushright}

\section{Introduction}
We begin this introduction with recalling two prepotentials
related with the Coxeter group $W(H_3)$ of type $H_3$.
One is the polynomial  prepotential 
$F_{H_3}(x_1,x_2,x_3)$  
defined by B. Dubrovin (cf. \cite{Du}):
\begin{equation}
\label{equation:H3-poly-potential}
F_{H_3}(x_1,x_2,x_3)=\frac{{x_1}^{11}}{3960}+\frac{{x_1}^5 {x_2}^2}{20}+
\frac{{x_1}^2 {x_2}^3}{6}+\frac{1}{2} \left({x_1} {x_3}^2+{x_2}^2
   {x_3}\right).
\end{equation}
The other is the algebraic prepotential
$F_{(H_3)'}(t_1,t_2,t_3)$ (cf. \cite{KMS2}).
Its concrete form is given by
\begin{equation}
\label{equation:(H3)'-potential}
F_{(H_3)'}(t_1,t_2,t_3)=\displaystyle{\frac{t_1t_3^2+t_2^2t_3}{2}-
\frac{t_1^4z}{18}-\frac{7t_1^3z^4}{72}-\frac{17t_1^2z^7}{105}-\frac{2t_1z^{10}}{9}
-\frac{64z^{13}}{585}},
\end{equation}
where $z$ is an algebraic function of $t_1,t_2$ defined by the equation
$$
t_2+t_1z+z^4=0.
$$
Let $\Delta_{H_3}(x_1,x_2,x_3)$ (resp. $\Delta_{(H_3)'}(t_1,t_2,t_3))$
be the discriminant of $F_{H_3}$ (resp. $F_{(H_3)'}$).
Then $\Delta_{H_3}$ is a weighted homogeneous polynomial
with weights $1/5,3/5,1$.
On the other hand,
we define ${\tilde \Delta}_{(H_3)'}(t_1,t_3,z)=
\Delta_{(H_3)'}(t_1,-t_1z-z^4,t_3)$.
Then
${\tilde \Delta}_{(H_3)'}$ is a weighted homogeneous polynomial
of $t_1,t_3,z$ with  weights $3/5,1,1/5$.
These two facts suggest the existence of an isomorphism
between
 $\Delta_{H_3}$ and ${\tilde \Delta}_{(H_3)'}$.
Actually, as was shown in \cite{Se}, 
${\tilde \Delta}_{(H_3)'}$ coincides with
 $\Delta_{H_3}$ up to a constant factor,
 by the transformation
\begin{equation}
\label{equation:H3-(H3)'-trans}
t_1=m^3(x_1^3+x_2),\>t_3=m^5\left(\frac{3}{2}x_3+x_1^2x_2\right),\>
z=-mx_1\> (t_2=m^4x_1x_2).
\end{equation}
Since $(x_1,x_2,x_3)$ is the flat coordinate of $F_{H_3}$ and
since $(t_1,t_2,t_3)$ is those of $F_{(H_3)'}$,
(\ref{equation:H3-(H3)'-trans}) is regarded as a relation
between these two flat coordinates.

Feigin, Valeri and Wright (\cite{FVW}) gave a relation between
these two flat coordinates by a different method.
The relation they obtained is
\begin{equation}
\label{equation:H3-(H3)'-trans-FVW}
\left\{\begin{array}{lll}
x_1&=& \frac{c_0}{m^3} t_1, \\
x_2&=& -\frac{c_0^3}{27m^9} (t_1 + z^3) (5 t_1^2 + 180 t_3 z + 10 t_1 z^3 + 32 z^6),\\
x_3 &=& -\frac{c_0^5}{810 m^{15} }(-119 t_1^5 - 12000 t_3^3 - 
         4200 t_1^3 t_3 z + 
                6050 t_1^4 z^3 - 3150 t_1^2 t_3 z^4 \\
&&+ 18745 t_1^3 z^6 + 
         3600 t_1 t_3 z^7 + 21040 t_1^2 z^9 + 
                3360 t_3 z^{10} + 10640 t_1 z^{12 }\\
&&+ 2176 z^{15})\\
\end{array}
\right.
\end{equation}
for a certain constant $c_0$.

The two formulas   (\ref{equation:H3-(H3)'-trans}) and
(\ref{equation:H3-(H3)'-trans-FVW}) appear different,
but the latter can be derived from the former through an idea
based on the existence of a three dimensional
non-reflection
representation $V$ of $W(H_3)$.
Such a representation is realized by three polynomials
$P_1,P_2,P_3$ 
of coordinates of $V$ of degree three.
The polynomials $P_1,P_2,P_3$ play an important role in our argument.
The concrete forms of  $P_1,P_2,P_3$ and the property of them
will be given in \S2.4.

Let $F_{H_4}$ be the polynomial prepotential of type $H_4$ (see \cite{Du}).
There are seven algebraic prepotentials denoted by
$H_4(j)\>(j=1,2,3,4,6,7,9)$ (cf. \cite{Se}) related with
the Coxeter group $W(H_4)$ of type $H_4$.
Feigin, Valeri and Wright \cite{FVW} determined 
a relation between the flat coordinates of the polynomial
prepotential $F_{H_4}$  and those
of each of algebraic prepotentials $F_{H_4(j)}\>(j=1,2,3,4,7)$.
It is noted here that there is an obstruction to
apply the method in \cite{FVW} to the remaining two cases 
 $(F_{H_4},F_{H_4(6)})$, $(F_{H_4},F_{H_4(9)})$.
The argument for the case $(F_{H_3},\>F_{(H_3)'})$ of \S2 will be treated below
is applicable to the case $(F_{H_4},F_{H_4(9)})$, in spite that
it isn't to the case $(F_{H_4},F_{H_4(9)})$.
The main purpose of this paper is to demonstrate that, in the case 
$(F_{H_4},F_{H_4(9)})$, it is indeed possible to obtain a relation
between the flat coordinates  applying the idea performed in \S2.
We shall treat this case in Section 3.
In Subsection 3.1 to Subsection 3.3, we prepare some notation and show elementary properties
of the Coxeter group $W(H_4)$.
Let $(u_1,u_2,u_3,u_4)$ be a linear coordinate of the reflection
representation space and let $Z_1,Z_2,Z_3,Z_4$ be basic $W(H_4)$-invariants
of degrees 2, 12, 20, 30, respectively.
Subsection 3.4 treats a four dimensional representation of
$W(H_4)$ on the space of polynomials of 7th degree.
The existence of such a representation plays an important role in the
subsequent consideration.
The four dimensional representation looks like a reflection representation of $W(H_4)$
but different.
By using this representation, it s possible to construct four $W(H_4)$-invariants
denoted by $Y_1,Y_2,Y_3,Y_4$ of degrees 14, 84, 140, 210, respectively.
It is a bit heavy job to express these invairants 
as polynomials of the basic $W(H_4)$-invariants
whose degrees are known to be 2,12,20,30.
This is shown in Theorem \ref{theorem:formula-Y-Z}
which is  an analogue of Lemma \ref{lemma:lemma-3} for the case $W(H_3)$.
After the preparation in Subsections 3.6 and 3.7,
we shall treat the main topic of this paper in Subsection 3.7.
We first introduce the polynomial prepotential $F_{H_4}(x_1,x_2,x_3,x_4)$
and the algebraic prepotential $F_{H_4(9)}(t_1,t_2,t_3,t_4)$ and their
discriminants $\Delta_{H_4}$ and $\Delta_{H_4(9)}$.
It is noted here that $F_{H_4(9)}$ contains an additional algebraic function $w_0$
defined by the algebraic equation
$$
\frac{3}{20}t_1^2-t_3w_0^2+\frac{1}{5}t_2w_0^4+\frac{8}{5}t_1w_0^7+w_0^{14}=0.
$$
By this relation, $t_3$ is expressed by a rational function of $t_1,t_2,t_4,w_0$
and as a consequence, the  $\Delta_{H_4(9)}$
is regarded as a function of $t_1,t_2,t_4,w_0$,
eliminating $t_3$ by the reason above.
At present, we observe that $w_0^{10}\Delta_{H_4(9)}$ is a polynomial of $t_1,t_2,t_4,w_0$.
Then it is possible to show that by the transformation
$$
\left\{\begin{array}{lll}
t_1&=&\frac{1}{6}x_1(4{x_1}^6+315 x_2),\\
t_2&=&\frac{3675}{8}({x_1}^4x_2+18x_3),\\
t_4&=&\frac{46305}{32}(5{x_1}^3{x_2}^2+8{x_1}^5x_3+90x_4),\\
w_0&=&-x_1,\\
\end{array}
\right.
$$
$w_0^{10}\Delta_{H_4(9)}$ coincides with $\Delta_{H_4}$
up to a constant factor.
This is also an analogue of the formula 
(\ref{equation:H3-(H3)'-trans}) for the case $(H_3,(H_3)')$.
As a consequence, we obtain the required formula for the case $(H_4,H_4(9))$
analogous to 
(\ref{equation:identification-yast-x}).
The result is extremely lengthy.

Closing this introduction, we give a remark.
Reading \cite{FVW} leads us the problems which we are going to explain.
Let $Y_1,Y_2,Y_3,Y_4$ be $W(H_4)$-invariants of degrees 14, 84, 140, 210,
respectively
introduced above.
It follows from  the main result of this paper  that
$w_0^{k_{j}}Y_j\>(j=1,2,3,4)$ are polynomials of $t_1,t_2,t_4,w_0$
for some non-negative integers $k_1,k_2,k_3,k_4$.

\vspace{5mm}
Problem 1: Determine the algebraic equation for $w_0$ whose
coefficients  are polynomials of $Y_1,Y_2,Y_3,Y_4$.

Problem 2: Express $t_1,t_2,t_4$ as functions of $Y_1,Y_2,Y_3,Y_4,w_0$.

\vspace{5mm}
It is too difficult for the authors to solve these two problems.

\section{The case  $W(H_3)$}

\subsection{Two realizations of $W(H_3)$ as matrix groups}

We first recall the definition of the Coxeter group $W(H_3)$  of type $H_3$ and its  elementary properties
(cf. \cite{YS2}).
It is known that $W(H_3)$ is the group generated by
three elements
$s_1,s_2,s_3$ with the relations:
$$
s_1^2=s_2^2=s_3^2=1,\quad
(s_1s_2)^5=(s_2s_3)^3=(s_1s_3)^2=1.
$$
The group $W(H_3)$ is realized as a real reflection group.
I explain this briefly.
We put $e_1=(1,0,0),e_2=(0,1,0),e_3=(0,0,1)$ and denote by  $V$ the vector space
over ${\bf R}$ spanned by  $e_1,e_2,e_3$.
We introduce three vectors
\begin{equation}
\label{equation:def-root-H3}
\alpha_1=e_1,\>\alpha_2=-\frac{1}{2}(ae_1+\bar{a}e_2+e_3),\>\alpha_3=e_3,
\end{equation}
where
\begin{equation}
\label{equation:def-a-bara}
a=\frac{1+\sqrt{5}}{2},\>\bar{a}=\frac{1-\sqrt{5}}{2}.
\end{equation}
We denote by ${\tt u}=(u_1,u_2,u_3)$ the linear coordinate of $V$.
Let ${\tilde s}_j$ be the reflection of $V$ corresponding to $\alpha_j\>(j=1,2,3)$.
Then it is easy to show that
$$
{\tilde s}_1:u_1\to -u_1,\>{\tilde s}_3:u_3\to -u_3.
$$
As to ${\tilde s}_2$, by direct computation, we find that
$$
\left\{
\begin{array}{lll}
{u_1}&\to&\frac{1}{2} ({\bar a} {u_1}+u_2-a {u_3}),\\
{u_2}&\to&\frac{1}{2} (u_1+{ a} {u_2}-{\bar a} {u_3}),\\
{u_3}&\to&\frac{1}{2} (-au_1-{\bar a} {u_2}+u_3).\\
   \end{array}
   \right.
$$
Then  $W(H_3)$ is realized as the reflection group
generated by $\rho(s_j)\>(j=1,2,3)$, where
$$
\rho(s_1)=\left(\begin{array}{ccc}
-1&0&0\\
0&1&0\\
0&0&1\\
\end{array}
\right),\>
\rho(s_2)=\frac{1}{2}\left(\begin{array}{ccc}
{\bar a}&1&-a\\
1&a&-{\bar a}\\
-a&-{\bar a}&1\\
\end{array}
\right),\>
\rho(s_3)=\left(\begin{array}{ccc}
1&0&0\\
0&1&0\\
0&0&-1\\
\end{array}
\right).
$$
In the sequel,
we put ${\cal W}(H_3)=\rho(W(H_3))$.

We define another representation of $W(H_3)$ which we are going to construct.
Let  $\rho^{\ast}(s_j)=\rho(s_j)|_{\sqrt{5}\to-\sqrt{5}}\>(j=1,2,3)$ be three matrices.
Namely, $\rho^{\ast}(s_j)=\rho(s_j)\>(j=1,3)$ and
$$
\rho^{\ast}(s_2)=\frac{1}{2}\left(\begin{array}{ccc}
a&1&-{\bar a}\\
1&{\bar a}&-a\\
-{\bar a}&-a&1\\
\end{array}\right).
$$
Then ${\cal W}^{\ast}(H_3)$ is the group generated by $\rho^{\ast}(s_j)\>(j=1,2,3)$.
It is underlined here that ${\cal W}(H_3)$ and ${\cal W}^{\ast}(H_3)$ are isomorphic but not conjugate,
that is, there is no $g\in GL(3,{\bf C})$ such that 
${\cal W}^{\ast}(H_3)=g({\cal W}(H_3))g^{-1}$.

\subsection{Basic invariants of $W(H_3)$}
We start with introducing the elementary symmetric polynomials and difference product of $u_1^2,u_2^2,u_3^2$:
$$
\left\{
\begin{array}{lll}
{\epsilon_1}&=&
 {u_1}^2+{u_2}^2+{u_3}^2,\\
   {\epsilon_2}&=& {u_1}^2 {u_2}^2+{u_1}^2 {u_3}^2+{u_2}^2 {u_3}^2,\\
  {\epsilon_3}&=& {u_1}^2 {u_2}^2 {u_3}^2,\\
\delta&=& \left({u_1}^2-{u_2}^2\right) \left({u_1}^2-{u_3}^2\right) \left({u_2}^2-{u_3}^2\right).\\
  \end{array}
  \right.
  $$
  Let $P({\tt u})$ be a polynomial of ${\tt u}=(u_1,u_2,u_3)$.
 Then $P({\tt u})$ is ${\cal W}(H_3)$-invariant  if
 $$
P((u_1,u_2,u_3)\rho(s_k))=P({\tt u})\>(k=1,2,3).
$$
It is straightforward to show that
\begin{equation}
\label{equation:def-I-ep}
\begin{array}{lll}
I_1 ({\tt u})&=&2\epsilon_1,\\
I_2 ({\tt u})&=&20(-11\epsilon_3+\epsilon_1\epsilon_2+\sqrt{5}\delta),\\
I_3 ({\tt u})&=&80(95\epsilon_2\epsilon_3-32\epsilon_1^2\epsilon_3-5\epsilon_1\epsilon_2^2+
2\epsilon_1^3\epsilon_2-\frac{1}{25}\epsilon_1^5+3\sqrt{5}\epsilon_2\delta)\\
\end{array}
\end{equation}
are basic invariants  of the polynomial ring ${\bf C}[u_1,u_2,u_3]$ under the action of $W(H_3)$  in the above sense.
Similarly it is possible to find basic ${\cal W}^{\ast}(H_3)$-invariant polynomials.
We can take
\begin{equation}
\label{equation:def-J-ep}
\begin{array}{lll}
J_1 ({\tt u})&=&2\epsilon_1,\\
J_2 ({\tt u})&=&20(-11\epsilon_3+\epsilon_1\epsilon_2-\sqrt{5}\delta),\\
J_3 ({\tt u})&=&80(95\epsilon_2\epsilon_3-32\epsilon_1^2\epsilon_3-5\epsilon_1\epsilon_2^2+
2\epsilon_1^3\epsilon_2-\frac{1}{25}\epsilon_1^5-3\sqrt{5}\epsilon_2\delta).\\
\end{array}
\end{equation}
as ${\cal W}^{\ast}(H_3)$-basic invariants.
It is noted here that
$J_k$ is obtained from $I_k$ by changing $\sqrt{5}$ to $-\sqrt{5}$.

\subsection{The polynomial potential $(H_3)$}
The  polynomial potential $F_{H_3}$ is defined by B. Dubrovin 
(cf. (\ref{equation:H3-poly-potential})).
We explain briefly  how to define the discriminant  
$\Delta_{H_3}(x_1,x_2,x_3)$ of $F_{H_3}$.
Let $C_{H_3}$ be the $3\times 3$ matrix such that
$(C_{H_3})_{ij}=\partial_{x_i}\partial_{x_{4-j}}F_{H_3}$.
Then
$\Delta_{H_3}=\det(C_{H_3})$.
By a direct computation, we find that
\begin{equation}
\label{equation:H3-disc}
\begin{array}{lll}
\Delta_{H_3}(x_1,x_2,x_3)&=&-\frac{{x_1}^{15}}{1000}-\frac{{x_1}^{12} {x_2}}{100}-\frac{{x_1}^{10} {x_3}}{100}+\frac{2 {x_1}^9 {x_2}^2}{25}+\frac{{x_1}^6
   {x_2}^3}{50}+\frac{{x_1}^5 {x_3}^2}{10}-\frac{6}{5} {x_1}^4 {x_2}^2 {x_3}\\
&&   +\frac{23 {x_1}^3 {x_2}^4}{25}
   +{x_1}^2 {x_2} {x_3}^2-\frac{9}{5}
   {x_1} {x_2}^3 {x_3}+\frac{27 {x_2}^5}{125}+{x_3}^3.\\
   \end{array}
\end{equation}
By the substitution of $x_j=I_j\>(j=1,2,3)$, the discriminant 
$\Delta_{H_3}$ coincides with
the square of the product of the 15  linear forms 
below up to a constant factor:
{\footnotesize
$$
\begin{array}{llll}
u_1,&u_2,&u_3,&\\
u_1+au_2+{\bar a}u_3,&
u_1+au_2-{\bar a}u_3,&
u_1-au_2+{\bar a}u_3,&
u_1-au_2-{\bar a}u_3,\\
{\bar a}u_1+u_2+au_3,&
-{\bar a}u_1+u_2+au_3,&
{\bar a}u_1+u_2-au_3,&
-{\bar a}u_1+u_2-au_3,\\
au+{\bar a}u_2+u_3,&
au-{\bar a}u_2+u_3,&
-au+{\bar a}u_2+u_3,&
-au-{\bar a}u_2+u_3,\\
  \end{array}
$$
}
Let $\ell_j({\tt u})$ be the $j$-th linear form of the above 15 forms.
These linear forms correspond to the 15 reflections of ${\cal W}(H_3)$.
We define
$$
D_{H_3}({\tt u})=\prod_{j=1}^{15}\ell_j({\tt u}).
$$
Then 
$$
\Delta_{(H_3)}(x_1,x_2,x_3)=-2^{15}\cdot 5^2D_{H_3}({\tt u})^2
$$
 under the identification $x_k=I_k\>(k=1,2,3)$.

Let $\ell_j^{\ast}({\tt u})$ be the linear form obtained from $\ell_j({\tt u})$
by changing $\sqrt{5}$ to $-\sqrt{5}$.
For example,
since 
$$\ell_{7}({\tt u})=  u_1-au_2-{\bar a}u_3,$$
we have
$$\ell_{7}^{\ast}({\tt u})=  u_1-{\bar a}u_2-au_3.$$

\subsection{Three dimensional representation of ${\cal W}(H_3)$ on the space of polynomials of 3rd degree}

We  introduce three polynomials of degree 3:
\begin{equation}
\label{equation:P1P2P3-H3}
\begin{array}{lll}
P_1({\tt u})&=&
{u_1} \left( {u_1}^2-3{\bar a}{u_2}^2-3a{u_3}^2\right),\\
P_2({\tt u})&=&{u_2} \left(-3a {u_1}^2+
   {u_2}^2-3 {\bar a}{u_3}^2\right),\\
P_3({\tt u})&=&   {u_3} \left(-3 {\bar a} {u_1}^2-3a {u_2}^2+{u_3}^2\right)\\
\end{array}
\end{equation}
and define 
$$
v_k=P_k({\tt u})\>(k=1,2,3)
$$
and 
$${\tt v}=(v_1,v_2,v_3).
$$

The  formula in the lemma below plays a basic role in the subsequent consideration:

\begin{lemma}
\label{lemma:lemma-1}
$$
(P_1({\tt u}\rho(s_j)),P_2({\tt u}\rho(s_j)),P_3({\tt u}\rho(s_j)))={\tt v}\rho^{\ast}(s_j)\quad(j=1,2,3).
$$
\end{lemma}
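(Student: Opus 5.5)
The statement concerns only the three generators, so the plan is to verify the identity separately for $j=1,2,3$ by direct substitution. Throughout, ${\tt u}\rho(s_j)$ is the row vector ${\tt u}$ multiplied on the right by $\rho(s_j)$.

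\textbf{The cases $j=1,3$.} Since $\rho(s_1)$ only changes $u_1$ to $-u_1$, substitute this into (\ref{equation:P1P2P3-H3}). The polynomial $P_1$ is odd in $u_1$, while $P_2$ and $P_3$ contain $u_1$ only through $u_1^2$. Hence the result is $(-v_1,v_2,v_3)={\tt v}\rho^{\ast}(s_1)$, because $\rho^{\ast}(s_1)=\rho(s_1)$. The case $j=3$ is identical, with $P_3$ odd in $u_3$ and $P_1,P_2$ even in $u_3$.

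\textbf{The case $j=2$.} This is the real content. Put
\[
u_1'=\tfrac12(\bar a u_1+u_2-au_3),\quad u_2'=\tfrac12(u_1+au_2-\bar a u_3),\quad u_3'=\tfrac12(-au_1-\bar a u_2+u_3).
\]
We must show
\[
P_1({\tt u}')=\tfrac12(a v_1+v_2-\bar a v_3),\quad
P_2({\tt u}')=\tfrac12(v_1+\bar a v_2-a v_3),\quad
P_3({\tt u}')=\tfrac12(-\bar a v_1-a v_2+v_3),
\]
as read off from the columns of $\rho^{\ast}(s_2)$. Both sides are cubic forms in $u_1,u_2,u_3$, so the check is to compare the coefficients of the ten cubic monomials. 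We expand in ${\bf Q}(\sqrt5)$, reducing with $a+\bar a=1$, $a\bar a=-1$, $a^2=a+1$ and $\bar a^2=\bar a+1$.

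To cut the work, we use symmetry. The map ${\tt u}\mapsto{\tt u}\rho(s_2)$ is an involution. There is also a cyclic symmetry: simultaneously permuting $u_1\to u_2\to u_3\to u_1$ and swapping $a\leftrightarrow\bar a$ in suitable positions permutes $P_1,P_2,P_3$ cyclically up to conjugation. We do not rely on this, since the matrix $\rho(s_2)$ is not symmetric under that permutation. Instead we verify the first identity fully and treat the other two the same way, organizing each expansion by the powers of $u_1,u_2,u_3$.

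A conceptual cross-check is to compare the coefficient of $u_1^3$. On the left it is
\[
\tfrac18\bigl(\bar a^3-3\bar a\cdot 1-3a\cdot a^2\bigr),
\]
and this should equal $\tfrac12 a$. Simplifying $\bar a^3=2\bar a+1$ and $a^3=2a+1$ gives
\[
\tfrac18\bigl(2\bar a+1-3\bar a-6a-3\bigr)=\tfrac18(-\bar a-6a-2).
\]
This does not immediately match $\tfrac12 a$. That signals a bookkeeping issue between row and column conventions, that is, whether the substitution is ${\tt u}\rho(s_2)$ or $\rho(s_2){\tt u}^{T}$ (here the matrix is symmetric, so these agree), or a misreading of the coefficients.

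\textbf{Main obstacle.} The main obstacle is therefore pinning down the exact conventions and carrying out the 30 coefficient comparisons without error. I would do this by computer algebra, or by hand using the Galois involution $\sqrt5\mapsto-\sqrt5$ to halve the checks. A second, more structural route is available if the coefficients resist direct matching. The cubics $P_k$ span a $W(H_3)$-stable 3-dimensional subspace of the space of harmonic cubics. The $\sqrt5$-conjugate representation is the only other 3-dimensional irreducible representation besides the reflection one. One can then argue that the span is stable and nonisomorphic to $V$, so the action is given by $\rho^{\ast}$ in some basis. Finally, one checks that $\{P_k\}$ is that basis by computing the action of $s_1$ and $s_3$, which is diagonal, together with a single coefficient of $s_2$.
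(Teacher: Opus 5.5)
Your reduction to the generators and your parity argument for $s_1,s_3$ are correct, and direct expansion for $s_2$ is all the paper relies on here. The gap is that you never verify the $s_2$ case, which is the only substantive part. The one coefficient you compute is wrong, and you attribute the resulting mismatch to a convention problem that does not exist. Your conventions are right: row vector times matrix, with ${\tt u}'$ and the right-hand sides read off the columns of $\rho(s_2)$ and $\rho^{\ast}(s_2)$. The error is arithmetic.

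Every term of $P_1({\tt u}')=u_1'\,(u_1'^2-3\bar a u_2'^2-3a u_3'^2)$ carries the factor $u_1'=\tfrac12(\bar a u_1+\cdots)$. So the $u_1^3$-coefficients of $-3\bar a\,u_1'u_2'^2$ and $-3a\,u_1'u_3'^2$ are $-\tfrac38\bar a^2$ and $-\tfrac38\,a\bar a\,a^2=\tfrac38a^2$; you dropped the $\bar a$ coming from $u_1'$ in both. The correct value is
\[
\tfrac18\bigl(\bar a^3-3\bar a^2+3a^2\bigr)=\tfrac18\bigl((2\bar a+1)-3(\bar a+1)+3(a+1)\bigr)=\tfrac18(3a-\bar a+1)=\tfrac{a}{2},
\]
which is what the lemma requires.

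The full check is short if you first reduce the quadratic factor:
\[
u_1'^2-3\bar a u_2'^2-3a u_3'^2=-a^2u_1^2+u_2^2+(a-2)u_3^2+(a+2)u_1u_2+5u_1u_3+(a-3)u_2u_3 .
\]
Multiplying this by $\tfrac12(\bar a u_1+u_2-au_3)$ gives $\tfrac12(a v_1+v_2-\bar a v_3)$ coefficient by coefficient. The $u_1u_2u_3$ term cancels because $3-3a-3\bar a=0$. The other two components work the same way:
\[
-3a u_1'^2+u_2'^2-3\bar a u_3'^2=u_1^2+(a-2)u_2^2-a^2u_3^2+(3-a)u_1u_2-(a+2)u_1u_3+5u_2u_3,
\]
to be multiplied by $\tfrac12(u_1+au_2-\bar a u_3)$, and
\[
-3\bar a u_1'^2-3a u_2'^2+u_3'^2=(a-2)u_1^2-a^2u_2^2+u_3^2-5u_1u_2+(a-3)u_1u_3-(a+2)u_2u_3,
\]
to be multiplied by $\tfrac12(-au_1-\bar a u_2+u_3)$.

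Your structural fallback does not close the gap either. That the span of $P_1,P_2,P_3$ is stable under $\rho(s_2)$ is essentially the content of the lemma, and you only assert that one can argue it. Even granting stability and the isomorphism type, two problems remain:
\begin{itemize}
\item The isomorphism type needs more care. $W(H_3)\cong A_5\times\{\pm1\}$ has four $3$-dimensional irreducibles, and you must use that $-1$ acts by $-1$ on cubics.
\item One coefficient is not enough. The diagonal action of $s_1,s_3$ fixes the basis only up to $P_k\mapsto d_kP_k$, which changes the matrix of $s_2$ to $D\rho^{\ast}(s_2)D^{-1}$. A single coefficient fixes at most one ratio $d_i/d_j$, so you need two off-diagonal entries. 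The $u_1^3$-coefficient you tried is a diagonal entry and fixes no ratio at all.
\end{itemize}
The direct expansion above is the simpler complete argument.
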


This lemma shows that the representation $\rho^{\ast}$ of ${\cal W}(H_3)$ 
is realized on the vector space 
${\tilde V}=\{{\tt v}=(v_1,v_2,v_3)|\>v_j\in {\bf R}\>(j=1,2,3)\}$ over ${\bf R}$.
As we noted that
$
J_k({\tt v})\>(k=1,2,3)
$ 
are  the basic invariants of ${\cal W}^{\ast}(H_3)$ on the polynomial ring 
${\bf C}[v_1,v_2,v_3]$,
Lemma \ref{lemma:lemma-1} shows that
$J_k(P_1({\tt u}),P_2({\tt u}),P_3({\tt u}))\>(k=1,2,3)$ are
also ${\cal W}(H_3)$-invariant.

\begin{lemma}
\label{lemma:lemma-3}
If $v_k=P_k({\tt u})\>(k=1,2,3)$, then
\begin{equation}
\label{equation:J{ast}I}
\left\{
\begin{array}{lll}
{J_1({\tt v})}&=& c_0  \left({I_1}^3+3 {I_2}\right),\\
{J_2({\tt v})}&=& -c_0^3 {I_2} \left(3 {I_1}^6-30 {I_1}^3{I_2}-60 {I_1} {I_3}+5 {I_2}^2\right),\\
   {J_3({\tt v})}&=&- \frac{1}{10} c_0^5 \left(15 {I_1}^{12} {I_2}+30 {I_1}^{10} {I_3}-105
   {I_1}^9 {I_2}^2+300 {I_1}^7 {I_2} {I_3}-1140 {I_1}^6 {I_2}^3\right.\\
   &&\left.+150 {I_1}^4 {I_2}^2 {I_3}-4545 {I_1}^3 {I_2}^4-6000 {I_1}^2
   {I_2} {I_3}^2+4200 {I_1} {I_2}^3 {I_3}\right.\\
   &&\left.-357 {I_2}^5-4000 {I_3}^3\right).\\
   \end{array}
   \right.
  \end{equation}
for a non-zero constant $c_0$.
\end{lemma}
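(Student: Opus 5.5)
Since $J_k$ is ${\cal W}^{\ast}(H_3)$-invariant, Lemma \ref{lemma:lemma-1} gives, for each generator,
$$
J_k(P_1({\tt u}\rho(s_j)),P_2({\tt u}\rho(s_j)),P_3({\tt u}\rho(s_j)))=J_k({\tt v}\rho^{\ast}(s_j))=J_k({\tt v}) .
$$
Hence $J_k(P_1({\tt u}),P_2({\tt u}),P_3({\tt u}))$ is a ${\cal W}(H_3)$-invariant polynomial in ${\tt u}$. Because $I_1,I_2,I_3$ are basic invariants, it is a polynomial in $I_1,I_2,I_3$.

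Degree counting fixes the shape of these polynomials. Each $P_k$ is homogeneous of degree $3$, so $J_1({\tt v}),J_2({\tt v}),J_3({\tt v})$ have ${\tt u}$-degrees $6,30,30$ respectively. The $I_k$ have degrees $2,6,10$. Therefore $J_1({\tt v})$ is a linear combination of $I_1^3$ and $I_2$. Similarly, $J_2({\tt v})$ and $J_3({\tt v})$ are combinations of the finitely many monomials $I_1^aI_2^bI_3^c$ with $2a+6b+10c=30$, each involving about a dozen unknown coefficients. I will take these coefficients in ${\bf Q}(\sqrt5)$.

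I then determine the coefficients by comparing both sides, which reduces everything to linear algebra. The efficient route is to substitute many random rational points ${\tt u}$, or to compare coefficients of a few chosen monomials after expanding in $\epsilon_1,\epsilon_2,\epsilon_3,\delta$. First I compute $J_1({\tt v})=2(P_1^2+P_2^2+P_3^2)$ directly. Expanding with $a^2=a+1$, $a\bar a=-1$ and $a+\bar a=1$ should give a multiple of $I_1^3+3I_2$; this fixes $c_0$, and I expect $c_0$ to be a simple element of ${\bf Q}(\sqrt5)$, perhaps involving $\sqrt5$.

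For $J_2({\tt v})$ and $J_3({\tt v})$ I solve the resulting linear systems. Once the unknowns are expressed as rational multiples of $c_0^3$ and $c_0^5$, the coefficients should come out as in (\ref{equation:J{ast}I}).

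There is a structural check available. The weighting of $c_0$ is consistent with rescaling ${\tt v}$, since $J_k$ is homogeneous of degree $2,6,10$ in ${\tt v}$. Each $J_2({\tt v})$ and $J_3({\tt v})$ contains the factor $I_2$, apart from the $I_3^3$ term. This is consistent with the behaviour on the locus where $I_2$ vanishes, and it can serve as a sanity check.

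The main obstacle is the size of the computation for $J_3({\tt v})$. This is a degree-$30$ polynomial in three variables with coefficients in ${\bf Q}(\sqrt5)$, and expanding $J_3$ after substituting the cubics $P_k$ is heavy. To tame it, I will work in the variables $\epsilon_1,\epsilon_2,\epsilon_3,\delta$ and use the relation expressing $\delta^2$ as a polynomial in the $\epsilon$'s. I will also rely on a computer algebra system, evaluating at enough points to pin down the roughly dozen unknowns exactly and then verifying the identity symbolically.
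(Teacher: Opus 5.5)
Your plan is essentially the paper's proof. Lemma~\ref{lemma:lemma-1} makes $J_k(P_1,P_2,P_3)$ a ${\cal W}(H_3)$-invariant, hence a polynomial in $I_1,I_2,I_3$. The paper then fixes the coefficients ``by direct computation'', which is exactly your undetermined-coefficient scheme.

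A few details need fixing.

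\textbf{Degree of $J_2({\tt v})$.} $J_2$ has degree $6$ in ${\tt v}$, so $J_2({\tt v})$ has ${\tt u}$-degree $18$, not $30$. The ansatz must use the six monomials $I_1^aI_2^bI_3^c$ with $2a+6b+10c=18$. With your degree-$30$ ansatz the linear system for $J_2$ would be inconsistent, because $J_2({\tt v})$ is a nonzero homogeneous polynomial of degree $18$.

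\textbf{The sanity check is false.} Besides $-4000I_3^3$, the term $30I_1^{10}I_3$ of $J_3({\tt v})$ is not divisible by $I_2$ either. For instance, at ${\tt u}=(1,0,0)$ one has ${\tt v}=(1,0,0)$, $I_1=2$, $I_2=0$ and $I_3=J_3({\tt v})=-\frac{16}{5}$. Both $I_3$-terms are needed to reproduce this value.

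\textbf{The constant $c_0$.} Carrying out your $J_1$ computation gives $J_1({\tt v})=2\epsilon_1^3+15(\epsilon_1\epsilon_2-11\epsilon_3+\sqrt5\,\delta)=\frac14(I_1^3+3I_2)$. So $c_0=\frac14$ is rational and does not involve $\sqrt5$.
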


This lemma is shown by direct computation.

We define
$$
D_{H_3}^{\ast}({\tt u})=\prod_{j=1}^{15}\ell_j^{\ast}({\tt u}).
$$
Then as in the case of $\Delta_{(H_3)}$,
we find that
$$
\Delta_{(H_3)}(x_1^{\ast},x_2^{\ast},x_3^{\ast})=
-\frac{25}{512}D_{H_3}^{\ast}({\tt v})^2
$$
under the identification $x_k^{\ast}=J_k({\tt v})\>(k=1,2,3)$.
Moreover,
we write
$x_j=I_j\>(j=1,2,3)$.
Then (\ref{equation:J{ast}I}) turns out to be
\begin{equation}
\label{equation:X->I}
\left\{
\begin{array}{lll}
{x_1^{\ast}}&=& c_0  \left({x_1}^3+3 {x_2}\right),\\
{x_2^{\ast}}&=& -c_0^3 {x_2} \left(3 {x_1}^6-30 {x_1}^3{x_2}-60 {x_1} {x_3}+5 {x_2}^2\right),\\
   {x_3^{\ast}}&=&- \frac{c_0^5}{10}  \left(15 {x_1}^{12} {x_2}+30 {x_1}^{10} {x_3}-105
   {x_1}^9 {x_2}^2+300 {x_1}^7 {x_2} {x_3}-1140 {x_1}^6 {x_2}^3\right.\\
   &&\left.+150 {x_1}^4 {x_2}^2 {x_3}-4545 {x_1}^3 {x_2}^4-6000 {x_1}^2
   {x_2} {x_3}^2+4200 {x_1} {x_2}^3 {x_3}\right.\\
   &&\left.-357 {x_2}^5-4000 {x_3}^3\right).\\
   \end{array}
   \right.
  \end{equation}

\subsection{Algebraic potential $(H_3)'$}

In this section, we treat
 the algebraic prepotential $(H_3)'$ defined in (\ref{equation:(H3)'-potential}).
We show its elementary property which will be 
used in the subsequent consideration.

Let $\Delta_{(H_3)'}(t_1,t_2,t_3)$ be the discriminant of $F_{(H_3)'}$ which is defined by a similar way to the case $\Delta_{H_3}$.
Its concrete form is given by
$$
\begin{array}{ll}
&\Delta_{(H_3)'}(t_1,t_2,t_3)\\
=&72 t_1^5 + 3000 t_3^3 + 1800 t_1^3 t_3 z - 1500 t_1 t_3^2 z^2 - 
     1460 t_1^4 z^3 + 1050 t_1^2 t_3 z^4 - 2400 t_3^2 z^5 \\
&- 
  4325 t_1^3 z^6 - 
     2400 t_1 t_3 z^7 - 4440 t_1^2 z^9 - 1920 t_3 z^{10} - 2240 t_1 z^{12} - 
  512 z^{15}.\\
\end{array}
$$
The discriminant $\Delta_{(H_3)'}(t_1,t_2,t_3)$ is not a polynomial of $t_1,t_2,t_3$
by the reason that it contains $z$ which is an algebraic function of $t_1,t_2$.
Noting that $t_2=-t_1z-z^4$,
we define
${\tilde \Delta}_{(H_3)'}(t_1,t_3,z)=\Delta_{(H_3)'}(t_1,-t_1z-z^4,t_3)$.
Then
${\tilde \Delta}_{(H_3)'}(t_1,t_3,z)$ is a weighted homogeneous polynomial
of $t_1,t_3,z$ with  weights $3/5,1,1/5$.
It is known that
the discriminant $\Delta_{(H_3)}(x_1,x_2,x_3)$ is a weighted homogeneous polynomial of 
$x_1,x_2,x_3$ with weights $1/5,3/5,1$.
These facts suggest the existence of  a relation between 
$\Delta_{(H_3)}(x_1,x_2,x_3)$  and 
${\tilde \Delta}_{(H_3)'}(t_1,t_3,z)$
which will be given in the lemma below.

\begin{lemma}
Suppose that by a non-trivial weighted homogeneous transformation
between $(t_1,t_3,z)$ and $(x_1,x_2,x_3)$ defined by
\begin{equation}
\label{equation:weight-map-H3}
t_1=a_1x_1^3+a_2x_2,\>t_3=b_1x_3+b_2x_1^2x_2+b_3x_1^5,\>z=c_1x_1,
\end{equation}
${\tilde \Delta}_{(H_3)'}(t_1,t_3,z)$ coincides with
$\Delta_{H_3}(x_1,x_2,x_3)$ up to a constant factor,
where $a_i,b_j,c_1$ are constants.
Then there is a non-zero constant $m$ such that
(\ref{equation:weight-map-H3}) turns out to be
\begin{equation}
\label{equation:weight-map-H3-re}
t_1=m^3(x_1^3+x_2),\>t_3=m^5(\frac{3}{2}x_3+x_1^2x_2),\>z=-mx_1.
\end{equation}
\end{lemma}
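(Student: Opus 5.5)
Substituting (\ref{equation:weight-map-H3}) into ${\tilde \Delta}_{(H_3)'}(t_1,t_3,z)$ gives a weighted homogeneous polynomial of $x_1,x_2,x_3$ of weight $3$ (weights $1/5,3/5,1$). It is a linear combination of the same twelve monomials that occur in (\ref{equation:H3-disc}), with coefficients that are polynomials in $a_1,a_2,b_1,b_2,b_3,c_1$. The hypothesis says that this polynomial equals $\lambda\,\Delta_{H_3}(x_1,x_2,x_3)$ for some constant $\lambda\neq 0$. I will compare coefficients monomial by monomial and solve the resulting polynomial system in the seven unknowns $a_1,a_2,b_1,b_2,b_3,c_1,\lambda$.

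I first show that $a_2b_1c_1\neq 0$; this is where non-triviality is used. If $c_1=0$, then ${\tilde \Delta}_{(H_3)'}$ reduces to $72t_1^5+3000t_3^3$, which is a polynomial in $t_1,t_3$ only. Its $x_3$-free part then cannot reproduce the combination of $x_1^{15}$ and $x_2^5$ in $\Delta_{H_3}$ together with the mixed terms such as $x_1^3x_2^4$. Similar short checks exclude $a_2=0$ and $b_1=0$, since otherwise the transformation is degenerate and the monomials $x_2^5$, $x_3^3$ cannot both appear with the right coefficients. Having $c_1\neq0$, I set $m=-c_1$. Using the weighted scaling $t_1\mapsto m^3t_1$, $t_3\mapsto m^5t_3$, $z\mapsto mz$, under which ${\tilde \Delta}_{(H_3)'}$ is multiplied by $m^{15}$, I may normalize $c_1=-1$. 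It then remains to prove $a_1=a_2=1$, $b_1=3/2$, $b_2=1$, $b_3=0$.

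For the coefficient comparison I work from the top powers of $x_3$ downward. The $x_3^3$ coefficient gives $3000b_1^3=\lambda$, which fixes $\lambda$. The $x_3^2$ terms involve $x_1^5x_3^2$ and $x_1^2x_2x_3^2$. On the ${\tilde\Delta}$ side they come from $3000t_3^3$, $-1500t_1t_3^2z^2$ and $-2400t_3^2z^5$. Since these expressions are linear in $b_2,b_3$ once $a_1,a_2$ are known, they give two equations that determine $b_2,b_3$ in terms of $a_1,a_2,b_1$. Next the terms linear in $x_3$, namely $x_1^{10}x_3$, $x_1^4x_2^2x_3$ and $x_1x_2^3x_3$, give equations involving $a_1,a_2$. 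Finally the $x_3$-free terms $x_2^5$, $x_1^3x_2^4$, $\dots$, $x_1^{15}$ give the remaining relations. The $x_2^5$ coefficient is especially simple: it gives $72a_2^5=\lambda\cdot 27/125$. Combined with $\lambda=3000b_1^3$, this ties $a_2$ to $b_1$.

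The main obstacle is solving this overdetermined system: there are twelve equations and six unknowns after normalization. One must show that its only solution with $a_2b_1\neq0$ is the stated one, and that no spurious branches remain, for example other fifth roots in $a_2$ or solutions with $b_3\neq0$. My plan is to eliminate in the order $b_3,b_2$ (linear), then $b_1$ via $\lambda$, leaving equations in $a_1,a_2$ only. I would then compute a Gr\"obner basis or resultants of the two lowest-degree remaining equations and check the candidate roots against the other equations. This final elimination is routine but heavy, and it is exactly where, as the paper indicates for Lemma \ref{lemma:lemma-3}, direct computer computation is required. Substituting the resulting values back and undoing the normalization by $m$ yields (\ref{equation:weight-map-H3-re}).
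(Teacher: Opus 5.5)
Your method of substituting, comparing coefficients and normalizing $c_1=-1$ by the weighted scaling is exactly the paper's ``direct computation''. However, the step where you assert that the elimination returns $a_1=a_2=1$, $b_1=3/2$, $b_2=1$, $b_3=0$ would fail: these values do not satisfy the equations you wrote down yourself. The monomial $x_1x_2^3x_3$ can only come from $1800\,t_1^3t_3z$. So together with your two equations you have
\[
3000\,b_1^3=\lambda,\qquad 72\,a_2^5=\tfrac{27}{125}\lambda,\qquad 1800\,a_2^3b_1c_1=-\tfrac{9}{5}\lambda .
\]
The third equation also gives $a_2b_1c_1\neq0$ at once, which is cleaner than your case checks. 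For $c_1=-1$ the system reads $a_2^5=9b_1^3$ and $a_2^3=3b_1^2$, hence $a_2=b_1=3$, with no root-of-unity ambiguity. Your target values violate both relations, e.g.\ $1\neq 9\cdot\frac{27}{8}$. Continuing:
\begin{itemize}
\item $x_1^2x_2x_3^2$ gives $b_2=\frac32$;
\item $x_1^5x_3^2$ gives $a_1=1+6b_3$;
\item $x_1^{10}x_3$, together with the vanishing coefficient of $x_1^7x_2x_3$, forces $b_3=0$ and $a_1=1$.
\end{itemize}
Note that there are thirteen monomials of weight $3$, not twelve. The monomial $x_1^7x_2x_3$ is absent from $\Delta_{H_3}$, and its zero coefficient is a genuine equation.

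So with the displayed $\Delta_{H_3}$ and $\tilde\Delta_{(H_3)'}$, the transformation is necessarily
\[
t_1=m^3(x_1^3+3x_2),\qquad t_3=m^5\bigl(3x_3+\tfrac32x_1^2x_2\bigr),\qquad z=-mx_1 .
\]
One checks that all thirteen coefficients then match, with $\lambda=81000\,m^{15}$. The printed (\ref{equation:weight-map-H3-re}) is a misprint, and the same misprint appears in the introduction. The corrected form is the one the paper actually uses afterwards. Inverting it gives the $x_3$-formula of (\ref{equation:weight-map-H3-2}), whose $x_2$ should read $\frac{1}{3m^3}(t_1+z^3)$. It also gives $x_1^\ast=c_0(x_1^3+3x_2)=c_0t_1/m^3$, as in Theorem~\ref{theorem:theorem-1}. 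Your method is sound. The flaw is committing to an unattainable target without checking it against your own $x_2^5$ and $x_3^3$ equations, which already refute it.
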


This lemma is proved by a direct computation.

The formula (\ref{equation:weight-map-H3-re}) is equivalent to
\begin{equation}
\label{equation:weight-map-H3-2}
x_1=-\frac{1}{m}z,\>x_2=\frac{1}{m^3}(t_1+z^3),\>x_3=\frac{1}{6m^5}(2t_3-t_1z^2-z^5).
\end{equation}

\begin{theorem}
\label{theorem:theorem-1}
If $x_j^{\ast}\>(j=1,2,3)$ are defined by (\ref{equation:X->I}), then
\begin{equation}
\label{equation:No-16}
\left\{\begin{array}{lll}
x_1^{\ast} &=& \frac{c_0}{m^3} t_1, \\
x_2^{\ast}&=& -\frac{c_0^3}{27m^9} (t_1 + z^3) (5 t_1^2 + 180 t_3 z + 10 t_1 z^3 + 32 z^6),\\
x_3^{\ast} &=& -\frac{c_0^5}{810 m^{15} }(-119 t_1^5 - 12000 t_3^3 - 
         4200 t_1^3 t_3 z + 
                6050 t_1^4 z^3 - 3150 t_1^2 t_3 z^4 \\
&&+ 18745 t_1^3 z^6 + 
         3600 t_1 t_3 z^7 + 21040 t_1^2 z^9 + 
                3360 t_3 z^{10} + 10640 t_1 z^{12 }\\
&&+ 2176 z^{15}).\\
\end{array}
\right.
\end{equation}
\end{theorem}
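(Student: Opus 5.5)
The plan is to substitute the inverse transformation (\ref{equation:weight-map-H3-2}) into the formulas (\ref{equation:X->I}) and expand. Concretely, I would replace
$$
x_1=-\frac{z}{m},\quad x_2=\frac{t_1+z^3}{m^3},\quad x_3=\frac{2t_3-t_1z^2-z^5}{6m^5}
$$
in the right-hand sides of (\ref{equation:X->I}) and simplify each component as a polynomial in $t_1,t_3,z$. Since (\ref{equation:X->I}) is weighted homogeneous of weights $1,3,5$ in $(x_1,x_2,x_3)$, and (\ref{equation:weight-map-H3-2}) is homogeneous with weights $3,5,1$ for $(t_1,t_3,z)$, each $x_k^{\ast}$ becomes homogeneous of degree $3,9,15$. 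The powers of $m$ therefore factor out as $m^{-3},m^{-9},m^{-15}$. This already fixes the prefactors $c_0/m^3$, $c_0^3/m^9$ and $c_0^5/m^{15}$.

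The first component serves as a consistency check:
$$
x_1^3+3x_2=\frac{-z^3+3t_1+3z^3}{m^3}.
$$
This must collapse to $t_1/m^3$. With the stated sign conventions, though, it gives $3t_1+2z^3$, so I would first recheck the signs in (\ref{equation:weight-map-H3-2}) against (\ref{equation:weight-map-H3-re}) before trusting the remaining computation.

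From (\ref{equation:weight-map-H3-re}) we have $t_1=m^3(x_1^3+x_2)$ and $z=-mx_1$, hence $x_2=(t_1+z^3)/m^3$. Then $x_1^3+3x_2=(3t_1+2z^3)/m^3$, which is not proportional to $t_1$. So the identification here must involve a normalization that differs by a scaling of the $x_j$: the coordinates $x_j=I_j$ in Lemma \ref{lemma:lemma-3} may correspond to the Dubrovin flat coordinates only up to constant rescalings. Accordingly, the first step is to determine constants $\lambda_1,\lambda_2,\lambda_3$ with $x_j^{\rm flat}=\lambda_j I_j$ compatible with both $F_{H_3}$ and (\ref{equation:weight-map-H3-2}). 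I would fix them so that $x_1^{\ast}$ becomes exactly $c_0t_1/m^3$, which forces $\lambda_1^3/\lambda_2=-3/\ldots$ and similar relations. These are then checked against the requirement that $\Delta_{H_3}(I_1,I_2,I_3)$ is the square of $D_{H_3}$, which pins the normalization down.

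With the normalization settled, the second and third components are a mechanical expansion. For the second, I would verify that $x_2$ divides the result, giving the factor $(t_1+z^3)$, and that the cofactor $3x_1^6-30x_1^3x_2-60x_1x_3+5x_2^2$ becomes a multiple of $5t_1^2+180t_3z+10t_1z^3+32z^6$. For the third, I would expand the degree-15 polynomial and compare all monomials $t_1^at_3^bz^c$ with $3a+5b+c=15$ against the listed coefficients.

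The main obstacle is the bookkeeping of normalization constants, namely reconciling the scalings in (\ref{equation:X->I}) with (\ref{equation:weight-map-H3-2}). Once that is fixed, the check is a finite computer-algebra identity, which is how I would carry it out.
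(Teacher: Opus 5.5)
\paragraph{Assessment.} Your strategy, substituting the inverse map into (\ref{equation:X->I}) and expanding, is exactly what the paper does. Your first-component check is also correct: with (\ref{equation:weight-map-H3-2}) as printed, $x_1^{\ast}=c_0(3t_1+2z^3)/m^3$, so the theorem does not follow from the displayed formulas.

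\paragraph{The gap is in your repair.} A rescaling $x_j=\lambda_jI_j$ cannot be the fix. Formula (\ref{equation:X->I}) is derived precisely under $x_k=I_k$, and that is the normalization which makes $\Delta_{H_3}(I_1,I_2,I_3)$ proportional to $D_{H_3}^2$. Comparing the monomials $x_1^{15}$, $x_1^{12}x_2$, $x_1^{10}x_3$ shows that the only rescalings preserving $\Delta_{H_3}$ up to a constant are $(\lambda,\lambda^3,\lambda^5)$. Matching $x_1^{\ast}$, however, forces $\lambda_2=3\lambda_1^3$. So the consistency check you propose would reject your own normalization, and the plan stalls there. Forcing $(\lambda_1,\lambda_2,\lambda_3)=(1,3,1)$ anyway happens to produce the right substitution, but the justification is false.

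\paragraph{Where the error actually is.} It is a misprint in (\ref{equation:weight-map-H3-re}) and (\ref{equation:weight-map-H3-2}); note that these two displays are not even inverse to each other in the third component. You can detect the misprint from the discriminants. At $x_1=z=0$ one has $\Delta_{H_3}=x_3^3+\frac{27}{125}x_2^5$ and $\tilde\Delta_{(H_3)'}=3000(t_3^3+\frac{3}{125}t_1^5)$. Hence $t_1=a_2x_2$, $t_3=b_1x_3$ requires $a_2^5=9b_1^3$, which fails for $(m^3,\frac32 m^5)$.

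The map that works is $t_1=m^3(x_1^3+3x_2)$, $t_3=m^5(3x_3+\frac32x_1^2x_2)$, $z=-mx_1$. It gives $\tilde\Delta_{(H_3)'}=81000\,m^{15}\Delta_{H_3}$. Equivalently,
\[
x_1=-\frac{z}{m},\quad x_2=\frac{t_1+z^3}{3m^3},\quad x_3=\frac{2t_3-t_1z^2-z^5}{6m^5}.
\]

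\paragraph{Verification with the corrected map.} Put $s=t_1+z^3$ and $q=2t_3-z^2s$, so that $x_2=s/(3m^3)$ and $x_3=q/(6m^5)$. Then:
\begin{itemize}
\item $x_1^{\ast}=c_0t_1/m^3$.
\item $9m^6(3x_1^6-30x_1^3x_2-60x_1x_3+5x_2^2)=5t_1^2+180t_3z+10t_1z^3+32z^6$.
\item $81m^{15}$ times the bracket in $x_3^{\ast}$ equals $-12000t_3^3+t_3(810z^{10}-2700z^7s+9450z^4s^2-4200zs^3)+2295z^9s^2-6645z^6s^3+6645z^3s^4-119s^5$. The $t_3^2$ terms cancel, and this expands to the stated polynomial.
\end{itemize}
So the theorem is true, but only with this corrected substitution, which your proposal never pins down.
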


This theorem is  a consequence of
(\ref{equation:weight-map-H3-2}).

There is an interesting relation between $D_{H_3}({\tt u})$ and $
D_{H_3}^{\ast}(P_1,P_2,P_3)$ which is given in the lemma below.

\begin{lemma}
\label{lemma:reflection-reflection-ast}

(i)
$q_j({\tt u})=\ell_j^{\ast}(P_1({\tt u}),P_2({\tt u}),P_3({\tt u}))/\ell_j({\tt u})$
is a polynomial of ${\tt u}$ for $j=1,2,\ldots,15$.

(ii) We put
$$
\begin{array}{lll}
Q_0(I_1,I_2,I_3)&=&{I_1}^{15}-5 {I_1}^{12} {I_2}-20 {I_1}^{10} {I_3}-485 {I_1}^9 {I_2}^2-600 {I_1}^7 {I_2}
   {I_3}-1775 {I_1}^6 {I_2}^3\\
   &&-400 {I_1}^5 {I_3}^2+8700 {I_1}^4 {I_2}^2 {I_3}-12800 {I_1}^3
   {I_2}^4+14000 {I_1}^2 {I_2} {I_3}^2+8000 {I_3}^3.\\
   \end{array}
   $$
Then
\begin{equation}
\label{equation:jacobian-H3}
D_{H_3}^{\ast}(P_1,P_2,P_3)=c_1Q_0(I_1,I_2,I_3)D_{H_3}({\tt u}).
\end{equation}
for a non-zero constant $c_1$.
\end{lemma}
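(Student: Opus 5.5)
For part (i) I would exploit the equivariance in Lemma~\ref{lemma:lemma-1}. Let $r_j\in{\cal W}(H_3)$ be the reflection attached to $\ell_j$, and let $r_j^{\ast}$ be its Galois conjugate in ${\cal W}^{\ast}(H_3)$; this is the reflection attached to $\ell_j^{\ast}$. By Lemma~\ref{lemma:lemma-1} the map ${\tt u}\mapsto {\tt v}=(P_1,P_2,P_3)$ intertwines $\rho$ and $\rho^{\ast}$. Hence if ${\tt u}$ lies on the mirror $\ell_j({\tt u})=0$, so that ${\tt u}\rho(r_j)={\tt u}$, then ${\tt v}\rho^{\ast}(r_j)={\tt v}$. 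The fixed locus of the reflection $\rho^{\ast}(r_j)$ is exactly $\{\ell_j^{\ast}=0\}$, so $\ell_j^{\ast}(P({\tt u}))$ vanishes on the hyperplane $\ell_j=0$. Since $\ell_j$ is an irreducible linear form, it divides $\ell_j^{\ast}(P_1,P_2,P_3)$, and this gives (i).

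A quicker alternative for (i): check $j=1$ directly, since $\ell_1^{\ast}(P)=P_1$ is divisible by $u_1$. Then use transitivity of ${\cal W}(H_3)$ on its 15 reflections together with the intertwining property. Each $\ell_j$ is, up to a scalar, $\ell_1({\tt u}g)$ for some $g$, and then $\ell_j^{\ast}(P({\tt u}))$ is, up to a scalar, $\ell_1^{\ast}(P({\tt u}g))$. Both approaches need the fact that the conjugate reflections correspond, that is, $\rho^{\ast}(r_j)$ is the reflection in the mirror $\ell_j^{\ast}$. This holds because the entries of $\rho(g)$ and $\rho^{\ast}(g)$ are Galois conjugate by construction.

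For part (ii) I would use (i) to write $D_{H_3}^{\ast}(P)=D_{H_3}({\tt u})\prod_j q_j$, where each $q_j$ has degree $2$, so the product $Q=\prod_j q_j$ has degree $30$. Next I would determine how ${\cal W}(H_3)$ acts on $Q$. Since $D^{\ast}_{H_3}$ is anti-invariant (sign character) for ${\cal W}^{\ast}(H_3)$, Lemma~\ref{lemma:lemma-1} shows that $D_{H_3}^{\ast}(P({\tt u}))$ transforms by $\det\rho^{\ast}(g)=\det\rho(g)$. The polynomial $D_{H_3}$ transforms by the same character. Therefore $Q$ is ${\cal W}(H_3)$-invariant of degree $30$, hence a polynomial in $I_1,I_2,I_3$ of degrees $2,6,10$. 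The admissible monomials of weighted degree $30$ are exactly the eleven monomials appearing in $Q_0$, and possibly others such as $I_1^3I_3^2I_2^{0}$; I would list them all systematically.

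To fix the coefficients I would evaluate both sides at enough generic rational points ${\tt u}$, or equivalently expand in $\epsilon_1,\epsilon_2,\epsilon_3,\delta$ using (\ref{equation:def-I-ep}), and solve the resulting linear system for the coefficients. This should reproduce $Q_0$ up to the constant $c_1$.

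The main obstacle is this final identification. It requires computing $\prod_j q_j$, or $D^{\ast}_{H_3}(P)/D_{H_3}$, exactly in ${\bf Q}(\sqrt5)[{\tt u}]$, which is a degree-$45$ computation. I would reduce the work by comparing coefficients on a few lines in ${\tt u}$-space rather than expanding fully. One useful test is $u_2=u_3=0$: there only $I_1^{15}$ survives, which normalises $c_1$. Further lines of the form $(1,s,0)$ separate the remaining monomials.
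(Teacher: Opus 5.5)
Your proof is correct, but for (ii) it takes a different route from the paper.

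\textbf{How the paper proves (ii).} The paper does not use (i). It computes $\det\bigl(\partial(J_1,J_2,J_3)/\partial(u_1,u_2,u_3)\bigr)$ by the chain rule in two ways, once through $(I_1,I_2,I_3)$ and once through $(P_1,P_2,P_3)$. It then uses that Jacobians of basic invariants are proportional to $D_{H_3}$, resp.\ $D^{\ast}_{H_3}$. This gives
$D_{H_3}({\tt u})\det(\partial J/\partial I)=\mathrm{const}\cdot D^{\ast}_{H_3}(P)\det(\partial P/\partial{\tt u})$.
Here $\det(\partial P/\partial{\tt u})$ is proportional to $I_2-I_1^3$, and the explicit formulas of Lemma~\ref{lemma:lemma-3} give $\det(\partial J/\partial I)$ proportional to $(I_2-I_1^3)Q_0$. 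So $Q_0$ appears as a Jacobian factor of the induced map of orbit spaces. The only computation is a $3\times 3$ determinant in ${\bf C}[I_1,I_2,I_3]$.

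\textbf{How your route differs.} You prove (i) by equivariance. The paper's $H_3$ proof does not really address (i): (ii) alone gives $D_{H_3}\mid D^{\ast}_{H_3}(P)$, but not $\ell_j\mid\ell_j^{\ast}(P)$ for the matching $j$. Your second argument for (i) is the one the paper uses for the $H_4$ analogue. You then get invariance of $Q=\prod_j q_j$ from the sign character, with no Jacobian theory and no need for Lemma~\ref{lemma:lemma-3}. The price is that $Q_0$ becomes a black box that you must fit by undetermined coefficients.

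\textbf{Corrections to the fitting step.}
\begin{itemize}
\item There are $13$ monomials $I_1^aI_2^bI_3^c$ with $a+3b+5c=15$, not $11$. The two absent from $Q_0$ are $I_2^5$ and $I_1I_2^3I_3$, whose coefficients must come out zero. Your example $I_1^3I_3^2$ has weighted degree $26$, not $30$.
\item On $u_2=u_3=0$ you have $I_2=0$, but $I_3=-\frac{16}{5}u_1^{10}\neq0$. So $I_1^{15}$, $I_1^{10}I_3$, $I_1^5I_3^2$ and $I_3^3$ all survive, and this line does not isolate $I_1^{15}$ or normalise $c_1$. Also $D_{H_3}$ vanishes there, so you would have to work with the $q_j$ themselves.
\item More seriously, the lines $(u_1,0,0)$ and $(1,s,0)$ lie in mirrors. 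The discriminant $\Delta_{H_3}(I_1,I_2,I_3)$, a constant multiple of $D_{H_3}({\tt u})^2$, is a nonzero weighted-homogeneous polynomial of weighted degree exactly $30$ that vanishes on every mirror. Therefore no amount of data on mirror lines can distinguish $c_1Q_0$ from $c_1Q_0+\lambda\Delta_{H_3}$. Your claim that such lines ``separate the remaining monomials'' is false.
\end{itemize}

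\textbf{What fixes it.} You need at least one point off the mirrors. Your primary plan supplies this: evaluate $D^{\ast}_{H_3}(P({\tt u}))/D_{H_3}({\tt u})$ and $I_k({\tt u})$ at $13$ or more generic rational points, and solve the resulting linear system. That system is nonsingular for generic points because $I_1,I_2,I_3$ are algebraically independent. It is also cheap arithmetic in ${\bf Q}(\sqrt5)$, not a degree-$45$ polynomial expansion.
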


{\bf Proof}.
We fist recall an elementary identity:
$$
\frac{\partial(J_1,J_2,J_3)}{\partial(u_1,u_2,u_3)}=
\frac{\partial(I_1,I_2,I_3)}{\partial(u_1,u_2,u_3)}\frac{\partial(J_1,J_2,J_3)}{\partial(I_1,I_2,I_3)}
=
\frac{\partial(P_1,P_2,P_3)}{\partial(u_1,u_2,u_3)}
\frac{\partial(J_1,J_2,J_3)}{\partial(P_1,P_2,P_3)}.
$$
It is known (cf. \cite{YS2}) that
the determinant of
the Jacobian matrix
$
\displaystyle{\frac{\partial(I_1,I_2,I_3)}{\partial(u_1,u_2,u_3)}}
$
is the same as $D_{H_3}({\tt u})$ up to a constant factor.
By the same reason, that of
$
\displaystyle{\frac{\partial(J_1,J_2,J_3)}{\partial(P_1,P_2,P_3)}}
$
is the same as $D_{H_3}^{\ast}({\tt v})$ up to a constant factor.
As a consequence, 
$$
D_{H_3}(u_1,u_2,u_3)\cdot 
\det\left(\frac{\partial(J_1,J_2,J_3)}{\partial(I_1,I_2,I_3)}
\right)
=
D_{H_3}^{\ast}(P_1,P_2,P_3)\cdot 
\det\left(
\frac{\partial(P_1,P_2,P_3)}{\partial(u_1,u_2,u_3)}
\right).
$$
It follows from (\ref{equation:P1P2P3-H3}) that
$\displaystyle{ \det\left(\frac{\partial(P_1,P_2,P_3)}{\partial(u_1,u_2,u_3)}\right)}$
coincides with $(I_2-I_1^3)$ up to a constant factor.
On the other hand, 
it follows from Lemma
\ref{lemma:lemma-3} that
$\displaystyle{\det\left(\frac{\partial(J_1,J_2,J_3)}{\partial(I_1,I_2,I_3)}\right)}$
coincides with
 $(I_2-I_1^3)Q_0(I_1,I_2,I_3)$ up to a constant factor.

Then the lemma follows. []

\subsection{Relation with the paper of Feigin, Valeri and Wright \cite{FVW}}

This section is devoted to an interpretation of the result by
 Feigin, Valeri and Wright \cite{FVW} on the construction of a map from the flat coordinates of
 the prepotential $F_{(H_3)'}$ to those of the prepotential $F_{H_3}$.
 
We start with preparing their notation.
\begin{equation}
\label{equation:def-y-ep}
\left\{
\begin{array}{lll}
y_1&=&\epsilon_1.\\
y_2&=&-11\epsilon_3+\epsilon_1\epsilon_2+\sqrt{5}\delta,\\
y_3&=&95\epsilon_2\epsilon_3-32\epsilon_1^2\epsilon_3-5\epsilon_1\epsilon_2^2+
2\epsilon_1^3\epsilon_2+3\sqrt{5}\epsilon_2\delta,\\
\end{array}
\right.
\end{equation}
The relation between $I_k$ in
(\ref{equation:def-I-ep})
and $y_1,y_2,y_3$ are
\begin{equation}
\label{equation:identification-I-y}
I_1=2y_1,\>I_2=20y_2,\>I_3=\frac{16}{5}(25y_3-y_1^5).
\end{equation}

They constructed a map from $(t_1,t_2,t_3)$-space to $(y_1,y_2,y_3)$-space by
\begin{equation}
\label{equation:def-y-t-a}
\left\{
\begin{array}{lll}
     {y_1}&=& \frac{20}{3}t_1,\\
 {y_2}&=& \frac{3200}{729} \left(180
   {t_2} {t_3}-32 {t_2}^2 z-22 {t_1} {t_2} z^2-5 {t_1}^3-5 {t_1}^2
   z^3\right),\\
{y_3}&=& \frac{128000}{19683} \left(12000 {t_3}^3-3360 {t_2}^2 {t_3} z^2-3390 {t_1}^2
   {t_2} {t_3}-3120 {t_1} {t_2} {t_3} z^3\right.\\
&&\left.+810 {t_1}^3 {t_3} z
   +4112 {t_1} {t_2}^3+2176 {t_2}^3 z^3-2176 {t_1}^2 {t_2}^2 z-119 {t_1}^3 {t_2}
   z^2\right.\\
&&\left.+200 {t_1}^5+119 {t_1}^4 z^3\right).\\
   \end{array}
   \right.
\end{equation}

We are going to show that (\ref{equation:def-y-t-a})
is equivalent to (\ref{equation:No-16})
 in Theorem \ref{theorem:theorem-1} by changing $y_j$ with 
different functions $y_j^{\ast}\>(j=1,2,3)$ defined below 
(cf. (\ref{equation:identification-yast-x})).

At first, we note that
since the expression of (\ref{equation:def-y-t-a}) is not unique because of the relation 
   $
t_2+t_1z+z^4=0,
$
we eliminate $t_2$ in (\ref{equation:def-y-t-a})
 by $t_2=-t_1z-z^4$. Then
(\ref{equation:def-y-t-a}) turns out to be
\begin{equation}
\label{equation:def-y-t-b}
\left\{
\begin{array}{lll}
  {y_1}&=& \frac{20 }{3}t_1,\\
  {y_2}&=& -\frac{3200}{729} \left({t_1}+z^3\right) \left(180 {t_3}
   z+5 {t_1}^2+10 {t_1} z^3+32 z^6\right),\\
{y_3}&=& \frac{128000}{19683} \left(12000 {t_3}^3+4200 {t_1}^3 {t_3} z+3150 {t_1}^2
   {t_3} z^4-3600 {t_1} {t_3} z^7-3360 {t_3} z^{10}\right.\\
   &&\left.+200 {t_1}^5-6050 {t_1}^4
   z^3-18745 {t_1}^3 z^6-21040 {t_1}^2 z^9\right.\\
   &&\left.-10640 {t_1} z^{12}-2176
   z^{15}\right),\\
   \end{array}
   \right.
   \end{equation}

Since $x_j=I_j\>(j=1,2,3)$, the relation (\ref{equation:identification-I-y}) is equivalent to
\begin{equation}
\label{equation:identification-y-x}
y_1=\frac{1}{2}x_1,\>y_2=\frac{1}{20}x_2,\>y_3=\frac{1}{800}(10x_3+x_1^5).
\end{equation}
We introduce $y_1^{\ast},y_2^{\ast},y_3^{\ast}$ by
\begin{equation}
\label{equation:identification-y-x-ast}
y_1^{\ast}=\frac{1}{2}x_1^{\ast},\>y_2^{\ast}=\frac{1}{20}x_2^{\ast},
\>y_3^{\ast}=\frac{1}{800}(10x_3^{\ast}+{x_1^{\ast}}^5).
\end{equation}
By virtue of the equation of Theorem 1
 and (\ref{equation:identification-y-x-ast}),
we have
\begin{equation}
\label{equation:identification-yast-x}
\left\{
\begin{array}{lll}
y_1^{\ast}&=&\frac{1}{2m}c_0 t_1, \\
 y_2^{\ast}&=&-\frac{1}{540m^3} 
   c_0^3 (t_1 + z^3) (5 t_1^2 + 180 t_3 z + 10 t_1 z^3 + 32 z^6), \\
y_3^{\ast}&=&  -\frac{1}{64800m^5}
 c_0^5 (-200 t_1^5 - 12000 t_3^3 - 4200 t_1^3 t_3 z + 
         6050 t_1^4 z^3 - 
                3150 t_1^2 t_3 z^4\\
&& + 18745 t_1^3 z^6 + 3600 t_1 t_3 z^7 + 
         21040 t_1^2 z^9 + 3360 t_3 z^{10} \\
&&+ 
                10640 t_1 z^{12} + 2176 z^{15}).\\
\end{array}
\right.
\end{equation}

It is clear that (\ref{equation:def-y-t-b})
and the same
 by changing $y_j$ to $y_j^{\ast}\>(j=1,2,3)$ and putting $m=\frac{3c_0}{400}$.

In this manner, the formula (\ref{equation:def-y-t-a}) obtained by
Feigin, Valeri and Wright is proved by a group theoretic argument.

\section{The case  $H_4$}

This section contains the main result of this paper.

There are seven algebraic prepotentials denoted by
$H_4(j)\>(j=1,2,3,4,6,7,9)$ (cf. \cite{Se}) related with
the Coxeter group $W(H_4)$ of type $H_4$.
Feigin, Valeri and Wright \cite{FVW} determined 
a relation between the flat coordinates of the polynomial
prepotential $F_{H_4}$  and those
of each of algebraic prepotentials $F_{H_4(j)}\>(j=1,2,3,4,7)$.
Their idea is not applicable 
 to the remaining two cases $F_{H_4(6)},\>F_{H_4(9)}$
as mentioned in \cite{Wr}, p.118.
It is a problem to obtain
the relation between the flat coordinates of $F_{H_4}$ and 
those of $F_{H_{4}(6)}$ (or $F_{H_4(9)}$).
In this section,
we treat the case $H_4(9)$ and explain
how by our idea  developed in the previous section,
the flat coordinates of $F_{H_4}$ are expressed as rational functions
of the flat coordinates and an additional algebraic function $w_0$.

\subsection{A realization of $W(H_4)$ as a matrix group}

We first recall the definition of the Coxeter group $W(H_4)$ of type $H_4$.
It is known that $W(H_4)$ is generated by four elements $s_1,s_2,s_3,s_4$
with the relations:
$$
s_j^2=1\>(j=1,2,3,4),
(s_1s_2)^5=(s_1s_3)^2=(s_1s_4)^2=
(s_2s_3)^3=(s_2s_4)^2=(s_3s_4)^3=1.
$$
It is clear from the definition that the subgroup of $W(H_4)$
generated by $s_1,s_2,s_3$ is identified with $W(H_3)$.

We put $e_1=(1,0,0,0),e_2=(0,1,0,0),e_3=(0,0,1,0),e_4=(0,0,0,1)$ and denote by  $V$ the vector space spanned by  $e_1,e_2,e_3,e_4$ over ${\bf R}$.
We introduce four vectors
\begin{equation}
\label{equation:def-root-H4}
\alpha_1=e_1,\>\alpha_2=-\frac{1}{2}(ae_1+\bar{a}e_2+e_3),\>\alpha_3=e_3,
\alpha_4=-\frac{1}{2}(ae_2+e_3+\bar{a}e_4),
\end{equation}
where $a,\>\bar{a}$ are those defined in
(\ref{equation:def-a-bara}).
We denote by ${\tt u}=(u_1,u_2,u_3,u_4)$ the linear coordinate of $V$.
Let ${\tilde s}_j$ be the reflection of $V$ corresponding to $\alpha_j\>(j=1,2,3,4)$.
Then the matrix expression of $s_j\>(j=1,2,3,4)$ are given by ${\tilde s}_j\>(j=1,2,3,4)$,
in other words,
$$
\sigma(s_j)=\left(\begin{array}{cc}
\rho(s_j)&0\\
0&1\\
\end{array}\right)\>(j=1,2,3),
\quad
\sigma(s_4)=\frac{1}{2}\left(\begin{array}{cccc}
2&0&0&0\\
0&-{\bar a}&-a&1\\
0&-a&1&-{\bar a}\\
0&1&-{\bar a}&a\\
\end{array}\right).
$$

The group ${\cal W}(H_4)$  generated by  $\sigma(s_j)\>(j=1,2,3,4)$ defines a representation of $W(H_4)$.
In the sequel,
we identify the group
$W(H_4)$ with ${\cal W}(H_4)=\sigma(W(H_4))$.

As in the case $W(H_3)$, 
we define another representation of $W(H_4)$ which we are going to construct.
Let  $\sigma^{\ast}(s_j)=\sigma(s_j)|_{\sqrt{5}\to-\sqrt{5}}\>(j=1,2,3,4)$ be four matrices.
Then ${\cal W}^{\ast}(H_4)$ is the group generated by $\sigma^{\ast}(s_j)\>(j=1,2,3,4)$.
It is underlined here that ${\cal W}(H_4)$ and ${\cal W}^{\ast}(H_4)$ are isomorphic but not conjugate.

\subsection{Basic invariants of ${\cal W}(H_4)$}

In this subsection, we construct basic invariants of ${\cal W}(H_4)$.
For this purpose, we need some preparation.
At first we construct basic invariants of the reflection group of type $H_3$.
It is easy to see that the group $H$  generated by $\sigma(s_j)\>(j=1,2,3)$     is a subgroup of 
${\cal W}(H_4)$
 identified with ${\cal W}(H_3)$,
it follows from (\ref{equation:def-I-ep}) that
the polynomials $h_2,h_6,h_{10}$ given below are  invariants of $H$:
\begin{equation}
\label{equation:def-Ia-ep}
\begin{array}{lll}
h_2 (u_1,u_2,u_3)&=&\epsilon_1,\\
h_6 (u_1,u_2,u_3)&=&-11\epsilon_3+\epsilon_1\epsilon_2+\sqrt{5}\delta,\\
h_{10} (u_1,u_2,u_3)&=&95\epsilon_2\epsilon_3-32\epsilon_1^2\epsilon_3-5\epsilon_1\epsilon_2^2+
2\epsilon_1^3\epsilon_2+3\sqrt{5}\epsilon_2\delta.\\
\end{array}
\end{equation}
(We note that
$h_2=\frac{1}{2}I_1,\>h_6=\frac{1}{20}I_2,\>h_{10}=\frac{1}{80}I_3$.)
Invariants of ${\cal W}(H_4)$ can be written as polynomials of $h_2,h_6,h_{10}$ and $u_4$.
We take basic invariants of ${\cal W}(H_4)$ 
defined by the following manner:
\begin{equation}
\label{equation:H4-invariants}
\left\{
{\small
\begin{array}{lll}
Z_2 &=& h_2 + u_4^2, \\
 Z_{12} &=& -h_{10} h_2 + \frac{3}{2} h_6^2 +( 11 h_{10}  - 2 h_2^5) u_4^2 + 
   ( 6 h_2^4 - 
        33 h_2 h_6 )u_4^4 - (14 h_2^3 - 33 h_6) u_4^6 \\
        &&+ 6 h_2^2 u_4^8 - 
    2 h_2 u_4^{10}, \\
Z_{20}&=&    
   \frac{1}{2} (2 h_{10}^2 - 3 h_2 h_6^3) +\frac{1}{2} (8 h_{10} h_2^4 - 42 h_2^3 h_6^2 + 
    57 h_6^3) u_4^2\\
    && - 
   2 (38 h_{10} h_2^3 - 2 h_2^8 + 57 h_{10} h_6 - 15 h_2^5 h_6 - 84 h_2^2 h_6^2) 
  u_4^4\\
  && + 
   2 h_2 (148 h_{10} h_2 - 10 h_2^6 - 69 h_2^3 h_6 - 153 h_6^2) u_4^6\\
   && - 
 2 (232 h_{10} h_2 - 22 h_2^6 - 201 h_2^3 h_6 - 147 h_6^2) 
     u_4^8 + 2 (90 h_{10} - 22 h_2^5 - 201 h_2^2 h_6) u_4^{10}\\
     && + 
 2 h_2 (22 h_2^3 + 69 h_6) u_4^{12} - 10 (2 h_2^3 + 3 h_6) u_4^{14 }+ 
   4 h_2^2 u_4^{16},\\
Z_{30}&=&
\frac{1}{15} (20 h_{10}^3 - 45 h_{10} h_2 h_6^3 + 27 h_6^5) \\
&&+ (8 h_{10}^2 h_2^4 - 
    42 h_{10} h_2^3 h_6^2 - 87 h_{10} h_6^3 - 6 h_2^5 h_6^3 + 
        135 h_2^2 h_6^4) 
  u_4^2 \\
  &&+ (-152 h_{10}^2 h_2^3 + 16 h_{10} h_2^8 + 348 h_{10}^2 h_6 + 
    60 h_{10} h_2^5 h_6 + 408 h_{10} h_2^2 h_6^2 - 
        84 h_2^7 h_6^2 \\
        &&+ 84 h_2^4 h_6^3 - 909 h_2 h_6^4) u_4^4 + 
   \frac{2}{3} (1752 h_{10}^2 h_2^2 - 516 h_{10} h_2^7 + 16 h_2^{12} - 
    3258 h_{10} h_2^4 h_6 \\
    &&+ 180 h_2^9 h_6 - 2862 h_{10} h_2 h_6^2 + 
        1998 h_2^6 h_6^2 + 432 h_2^3 h_6^3 + 3591 h_6^4) u_4^6 \\
        &&- 
   2 (1616 h_{10}^2 h_2 - 1072 h_{10} h_2^6 + 40 h_2^{11} - 5454 h_{10} h_2^3 h_6 + 
    336 h_2^8 h_6 + 453 h_{10} h_6^2 \\
    &&+ 3462 h_2^5 h_6^2 + 
        978 h_2^2 h_6^3) 
  u_4^8 +\frac{2}{5} (8100 h_{10}^2 - 17900 h_{10} h_2^5 + 672 h_2^{10} \\
  &&- 
    63330 h_{10} h_2^2 h_6 + 4020 h_2^7 h_6 + 
        49920 h_2^4 h_6^2 + 18375 h_2 h_6^3) 
  u_4^{10} \\
  &&+\frac{4}{3} (10662 h_{10} h_2^4 - 332 h_2^9 + 17631 h_{10} h_2 h_6 - 
    936 h_2^6 h_6 - 
        20547 h_2^3 h_6^2 - 4347 h_6^3) u_4^{12} \\
        &&- 
 4 (4214 h_{10} h_2^3 - 68 h_2^8 + 1905 h_{10} h_6 - 4647 h_2^2 h_6^2) u_4^{14}\\
 && + 
   4 h_2 (2506 h_{10} h_2 + 68 h_2^6 + 312 h_2^3 h_6 - 1407 h_6^2) u_4^{16}\\
   && - 
  \frac{8}{3} (1080 h_{10} h_2 + 166 h_2^6 + 603 h_2^3 h_6 - 405 h_6^2) 
  u_4^{18} \\
  &&+\frac{24}{5} (75 h_{10} + 56 h_2^5 + 140 h_2^2 h_6) u_4^{20} - 
   40 h_2 (2 h_2^3 + 3 h_6) u_4^{22} + \frac{32}{3} h_2^3 u_4^{24}.\\
       \end{array}
       }
\right.
\end{equation}

\begin{remark}
The invariants $Z_j\>(j=2,12,20,30)$ are the same as $y_4,y_3,y_2,y_1$ given in \cite{FVW}.
The difference is the change of $u_1$ with $u_4$,
$h_j(u_1,u_2,u_3)$ with $h_j(u_4,u_3,u_2)$ $(j=2,6,10)$.
\end{remark}

Concrete forms of basic ${\cal W}^{\ast}(H_4)$-invariants are expressed as follows.
Let $k_2,k_6,k_{10}$ be the basic invariants of the group ${\cal W}^{\ast}(H_3)$ defined by
$$
k_2=\frac{1}{2}J_1,\>k_6=\frac{1}{20}J_2,\>k_{10}=\frac{1}{80}J_3,
$$
where $J_1,J_2,J_3$ are polynomials  given by
(\ref{equation:def-J-ep}).
By the substitution of  $h_2,h_6,h_{10}$
by $k_2,k_6,k_{10}$,
we obtain polynomial $Z_j^{\ast}$ from $Z_j$ $(j=2,12,20,30)$
(cf. (\ref{equation:H4-invariants})),
namely,
$$
\begin{array}{lll}
Z_2^{\ast} &=& k_2 + u_4^2, \\
 Z_{12}^{\ast} &=& -k_{10} k_2 + \frac{3}{2} k_6^2 +( 11 k_{10}  - 2 k_2^5) u_4^2 + 
   ( 6 k_2^4 - 
        33 k_2 k_6 )u_4^4 - (14 k_2^3 - 33 k_6) u_4^6 \\
        &&+ 6 k_2^2 u_4^8 - 
    2 k_2 u_4^{10}, \\
    {\rm etc}.&&\\
\end{array}
$$
Then it follows from the definition that $Z_2^{\ast},Z_{12}^{\ast},Z_{20}^{\ast},Z_{30}^{\ast}$
form the basic invariants of ${\cal W}^{\ast}(H_4)$.

\subsection{The discriminant of  ${\cal W}(H_4)$}

We define the set ${\cal R}$ of  60  linear forms defining reflections of ${\cal W}(H_4)$, that is,
${\cal R}$ is the set of the following linear forms:
\begin{equation}
\label{equation:60-linear forms}
\begin{array}{l}
u_1, u_2, u_3, u_4, \\
u_1 + u_2 + u_3 + u_4, u_1 - u_2 + u_3 + u_4, 
 u_1 + u_2 - u_3 + u_4, u_1 + u_2 + u_3 - u_4, \\
 u_1 - u_2 - u_3 + u_4, 
 u_1 - u_2 + u_3 - u_4, u_1 + u_2 - u_3 - u_4, 
   u_1 - u_2 - u_3 - u_4,\\
u_1 - a u_3 + \bar{a} u_4, 
 u_1 + a u_3 - \bar{a} u_4, 
 u_1 - a u_3 - \bar{a} u_4, 
   u_1 + a u_3 + \bar{a} u_4, \\
 u_1 + \bar{a} u_2 - a u_4, 
 u_1 - \bar{a} u_2 + a u_4, 
   u_1 - \bar{a} u_2 - a u_4, 
 u_1 + \bar{a} u_2 + a u_4, \\
 u_1 - a u_2 + \bar{a} u_3, 
   u_1 + a u_2 - \bar{a} u_3, 
 u_1 - a u_2 - \bar{a} u_3, 
 u_1 + a u_2 + \bar{a} u_3, \\
   \bar{a} u_1 + 
  u_2 - a u_3, -\bar{a} u_1 + 
  u_2 - a u_3, -\bar{a} u_1 + 
  u_2 + a u_3, 
   \bar{a} u_1 + 
  u_2 + a u_3, \\
  -a u_1 + 
  u_2 + \bar{a} u_4, a u_1 + 
  u_2 - \bar{a} u_4, 
   -a u_1 + 
  u_2 - \bar{a} u_4, a u_1 + 
  u_2 + \bar{a} u_4, \\
 u_2 + \bar{a} u_3 - a u_4, 
   u_2 - \bar{a} u_3 + a u_4, 
 u_2 - \bar{a} u_3 - a u_4, 
 u_2 + \bar{a} u_3 + a u_4, \\
   -a u_1 + \bar{a} u_2 + 
  u_3, -a u_1 - \bar{a} u_2 + 
  u_3, a u_1 - \bar{a} u_2 + u_3, 
   a u_1 + \bar{a} u_2 + 
  u_3, \\
  -a u_2 + 
  u_3 + \bar{a} u_4, a u_2 + 
  u_3 - \bar{a} u_4, 
   -a u_2 + 
  u_3 - \bar{a} u_4, a u_2 + 
  u_3 + \bar{a} u_4,\\
   \bar{a} u_1 + 
  u_3 - a u_4, 
   -\bar{a} u_1 + 
  u_3 - a u_4, -\bar{a} u_1 + 
  u_3 + a u_4, \bar{a} u_1 + 
  u_3 + a u_4, \\
   \bar{a} u_1 - a u_2 + 
  u_4, -\bar{a} u_1 - a u_2 + 
  u_4, -\bar{a} u_1 + a u_2 + u_4, 
   \bar{a} u_1 + a u_2 + 
  u_4, \\
  -a u_1 + \bar{a} u_3 + 
  u_4, -a u_1 - \bar{a} u_3 + u_4, 
   a u_1 - \bar{a} u_3 + 
  u_4, a u_1 + \bar{a} u_3 + 
  u_4, \\
  \bar{a} u_2 - a u_3 + u_4, 
   -\bar{a} u_2 - a u_3 + 
  u_4, -\bar{a} u_2 + a u_3 + 
  u_4, \bar{a} u_2 + a u_3 + u_4.\\
  \end{array}
 \end{equation}
 Let $D(u)=\prod_{\varphi\in{\cal R}}\varphi$.
 Then $D(u)^2$ is ${\cal W}(H_4)$-invariant.
 As a consequence,
 it is a polynomial of $Z_2,Z_{12},Z_{20},Z_{30}$. 
 In fact, it can be shown that
 $D(u)^2$ coincides with ${\tilde D}(Z_2,Z_{12},Z_{20},Z_{30})$
 up to a constant factor, where
 {\small
 \begin{equation}
 \label{equation:def-DFZ}
 \begin{array}{ll}
 &{\tilde D}(Z_2,Z_{12},Z_{20},Z_{30})\\
 =&64 (3888 Z_{12}^{10 }+ 250 Z_{12}^9 Z_2^6 - 35640 Z_{12}^8 Z_2^2 Z_{20} - 
    2250 Z_{12}^7 Z_2^8 Z_{20} + 110655 Z_{12}^6 Z_2^4 Z_{20}^2\\
&     + 
    6750 Z_{12}^5 Z_2^{10} Z_{20}^2 
    - 32400 Z_{12}^5 Z_{20}^3 - 
        123255 Z_{12}^4 Z_2^6 Z_{20}^3 - 6750 Z_{12}^3 Z_2^{12} Z_{20}^3\\
  &       - 
    256500 Z_{12}^3 Z_2^2 Z_{20}^4 - 2835 Z_{12}^2 Z_2^8 Z_{20}^4 + 
    40500 Z_{12} Z_2^4 Z_{20}^5 + 2187 Z_2^{10} Z_{20}^5 + 67500 Z_{20}^6)\\
&  -7200 Z_2 (156 Z_{12}^7 Z_2^2 + 10 Z_{12}^6 Z_2^8 - 963 Z_{12}^5 Z_2^4 Z_{20} - 
    60 Z_{12}^4 Z_2^{10} Z_{20} + 2160 Z_{12}^4 Z_{20}^2\\
 &    + 1698 Z_{12}^3 Z_2^6 Z_{20}^2 + 
    90 Z_{12}^2 Z_2^{12} Z_{20}^2 + 
        2970 Z_{12}^2 Z_2^2 Z_{20}^3 + 9 Z_{12} Z_2^8 Z_{20}^3 - 
    18 Z_2^4 Z_{20}^4)Z_{30}\\
& -10800 (108 Z_{12}^5 - 150 Z_{12}^4 Z_2^6 - 
    10 Z_{12}^3 Z_2^{12} + 1305 Z_{12}^3 Z_2^2 Z_{20} + 585 Z_{12}^2 Z_2^8 Z_{20}\\
 &    + 
        30 Z_{12} Z_2^{14} Z_{20} + 1215 Z_{12} Z_2^4 Z_{20}^2 + 15 Z_2^{10} Z_{20}^2 + 
    450 Z_{20}^3)Z_{30}^2\\
&    -13500 
  Z_2^3 (405 Z_{12}^2 + 90 Z_{12} Z_2^6 + 4 Z_2^{12} + 135 Z_2^2 Z_{20})Z_{30}^3\\
&+   1366875Z_{30}^4.\\
  \end{array}
 \end{equation}
  }

\subsection{Four dimensional representation of $W(H_4)$ on the space of polynomials of $7^{{\rm th}}$ degree}

We first introduce a  polynomial of degree 7 by
$$
f(u_1,u_2,u_3,u_4)=\frac{1}{168}u_4(-21h_6+14h_2^2u_4^2-14h_2u_4^4+2u_4^6)
$$
and using $f(u_1,u_2,u_3,u_4)$, define
$$
\begin{array}{lll}
P_1(u_1,u_2,u_3,u_4)&=&f(u_4,u_3,u_2,u_1),\\
P_2(u_1,u_2,u_3,u_4)&=&f(u_3,u_4,u_1,u_2),\\
P_3(u_1,u_2,u_3,u_4)&=&f(u_2,u_1,u_4,u_3),\\
P_4(u_1,u_2,u_3,u_4)&=&f(u_1,u_2,u_3,u_4).\\
\end{array}
$$
We  also define 
$$
v_k=P_k({\tt u})\>(k=1,2,3,4)
$$
and 
$${\tt v}=(v_1,v_2,v_3,v_4).
$$

The  equality in the lemma below plays a basic role in the subsequent consideration:

\begin{lemma}
\label{lemma:lemma-H4-H4}
$$
(P_1({\tt u}\sigma(s_j)),P_2({\tt u}\sigma(s_j)),P_3({\tt u}\sigma(s_j)),
P_4({\tt u}\sigma(s_j)))=
{\tt v}\sigma^{\ast}(s_j)\quad(j=1,2,3,4).
$$
\end{lemma}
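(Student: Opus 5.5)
We will prove Lemma \ref{lemma:lemma-H4-H4} by checking it on each generator separately, exploiting the symmetry built into $P_1,\dots,P_4$. Write $f(u_1,u_2,u_3,u_4)=\frac{1}{168}u_4\,g(h_2,h_6,u_4)$ with $g=-21h_6+14h_2^2u_4^2-14h_2u_4^4+2u_4^6$, where $h_2,h_6$ are the $H_3$-invariants in $(u_1,u_2,u_3)$ from (\ref{equation:def-Ia-ep}).

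\emph{Generators $s_1,s_2,s_3$.} These fix $u_4$ and act on $(u_1,u_2,u_3)$ by $\rho(s_j)$, so they preserve $h_2,h_6$. Hence $P_4\circ\sigma(s_j)=P_4$. Since $\sigma^{\ast}(s_j)$ has the block form $\mathrm{diag}(\rho^{\ast}(s_j),1)$, the last coordinate of ${\tt v}\sigma^{\ast}(s_j)$ is also $v_4$, so the fourth component already matches. For the first three components we must show
\[
(P_1,P_2,P_3)({\tt u}\sigma(s_j))=(P_1,P_2,P_3)({\tt u})\,\rho^{\ast}(s_j).
\]
For $s_1$ and $s_3$ this is a sign check. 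The $P_k$ are obtained from $f$ by coordinate permutations, and $f$ is odd in its last argument and even in the others, because $h_2,h_6$ are even in each of $u_1,u_2,u_3$: indeed $h_2$ and $h_6$ are expressed through $\epsilon_1,\epsilon_2,\epsilon_3,\delta$, all of which are even in each variable. So $u_1\mapsto -u_1$ negates exactly $P_1$, and $u_3\mapsto -u_3$ negates exactly $P_3$. The generator $s_2$ is a genuine computation: substitute $(u_1,u_2,u_3)\mapsto(u_1,u_2,u_3)\rho(s_2)$ into the degree-7 polynomials $P_1,P_2,P_3$ and compare with the linear combinations prescribed by $\rho^{\ast}(s_2)$, reducing with $a^2=a+1$, $\bar a=1-a$. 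This is analogous to Lemma \ref{lemma:lemma-1}, but each $P_k$ here involves all four variables.

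\emph{Generator $s_4$.} Here $u_1$ is fixed and $(u_2,u_3,u_4)$ transforms by a $3\times3$ block, so we need
\[
P_1\circ\sigma(s_4)=P_1,\qquad (P_2,P_3,P_4)\circ\sigma(s_4)=(P_2,P_3,P_4)\cdot B^{\ast},
\]
where $B^{\ast}$ is the lower block of $\sigma^{\ast}(s_4)$. For $P_1=f(u_4,u_3,u_2,u_1)$, the claim reduces to $h_2(u_4,u_3,u_2)$ and $h_6(u_4,u_3,u_2)$ being invariant under the reflection in $\alpha_4$. One checks that $\alpha_4$, with its coordinates reversed in the order $(4,3,2)$, is an $H_3$ root in the sense of (\ref{equation:def-root-H3}), so this follows from the $H_3$-invariance of $h_2,h_6$ (essentially the content of the Remark on \cite{FVW}). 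The remaining three identities we will verify by direct expansion, as for $s_2$.

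\emph{Main obstacle.} The hard part is the direct verification for $s_2$ and $s_4$: each is an identity between degree-7 polynomials in four variables with coefficients in ${\bf Q}(\sqrt5)$. We plan to do this by computer algebra, expanding both sides and reducing coefficients modulo $a^2-a-1$. To reduce the workload, we will use the symmetry of the construction. The Klein-four-type permutations $(u_1,u_2,u_3,u_4)\mapsto(u_4,u_3,u_2,u_1)$, $(u_3,u_4,u_1,u_2)$, $(u_2,u_1,u_4,u_3)$ permute the $P_k$, and it may be possible to choose one that conjugates $\sigma(s_4)$ into an element of the $H_3$ subgroup. If so, the $s_4$ case would follow from the $s_1,s_2,s_3$ cases, or at least the number of independent checks would drop. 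As a consistency check, it suffices to verify that $({\tt u}\sigma(s_j))\mapsto$ the stated image respects the Coxeter relations; but the identity itself still requires the explicit expansion.
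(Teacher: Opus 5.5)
Your proposal is correct and follows the same approach as the paper. The paper states the lemma without proof and leaves it, as with Lemma~\ref{lemma:lemma-1}, to direct verification on the generators; your parity and $H_3$-invariance remarks correctly dispose of $s_1$, $s_3$, the fourth component for $s_1,s_2,s_3$, and $P_1$ for $s_4$. Your tentative Klein-four reduction also works, which leaves only $s_2$ for the machine and avoids the printed $\sigma(s_4)$, whose $(2,2)$ entry $-\bar a$ is a misprint for $\bar a$: as printed, rows $2$ and $3$ are not orthogonal and the identity fails for $j=4$. Concretely, for the reversal $\pi:(u_1,u_2,u_3,u_4)\mapsto(u_4,u_3,u_2,u_1)$ one has $(P_1,\dots,P_4)({\tt u}\pi)=(P_1,\dots,P_4)({\tt u})\,\pi$ by construction, $r=\pi\sigma(s_4)\pi$ is the reflection in the $H_3$ root $\bar a u_1+u_2+a u_3$, and since $\pi$ is rational, $\sigma^{\ast}(s_4)=\pi\,r|_{\sqrt5\to-\sqrt5}\,\pi$, so the case $s_4$ follows from $s_1,s_2,s_3$.
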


Then $Y_j=Z_j^{\ast}(P_1,P_2,P_3,P_4)\>(j=2,12,20,30)$ are the basic invariants of ${\cal W}^{\ast}(H_4)$
if  $(P_1,P_2,P_3,P_4)$ is regarded as a linear coordinate.
Further more, it follows from Lemma \ref{lemma:lemma-H4-H4} that
 $Z_j^{\ast}(P_1,P_2,P_3,P_4)\>(j=2,12,20,30)$ are  invariants of ${\cal W}(H_4)$.
This means that each 
 $Y_j$ is expressed as a polynomial of $Z_j\>(j=2,12,20,30)$.

\begin{theorem}
\label{theorem:formula-Y-Z}
We put  $Y_j=Z_j^{\ast}(P_1,P_2,P_3,P_4)\>(j=2,12,20,30)$ as above.
Then the  concrete forms of 
 $Y_2,Y_{12},Y_{20},Y_{30}$ as  polynomials of $Z_j\>(j=2,12,20,30)$
 are given as follows:

{\footnotesize
\begin{verbatim}
Y2=
1/(2^5*3^2*7^2)*(+3*7 * Z12*Z2+2 * Z2^7)

Y12=
1/(2^32*3^14*5^2*7^12)*(
+2^3*3^12*5^4*7 * Z12^2*Z30^2
+2^3*3^12*5^4*7 * Z20*Z2^2*Z30^2
+3^14*5^5*7 * Z20^2*Z12*Z2*Z30
+2*3^10*5^7*7 * Z20^3*Z12^2
+2*3^10*5^7*7 * Z20^4*Z2^2
-2^8*3^8*5^5 * Z12^7
-2^4*3^9*5^3*7*103 * Z12^4*Z2^3*Z30
-2^2*3^8*5^4*7*113 * Z20*Z12^5*Z2^2
+2^6*3^11*5^3*7 * Z12*Z2^6*Z30^2
+2^3*3^9*5^2*7*41*43 * Z20*Z12^2*Z2^5*Z30
+2^3*3^8*5^4*7*17 * Z20^2*Z12^3*Z2^4
-2*3^9*5^2*7*17*29 * Z20^2*Z2^7*Z30
+2^2*3^9*5^3*7^2*137 * Z20^3*Z12*Z2^6
-2^4*3^6*5^2*7*1861 * Z12^6*Z2^6
-2^5*3^7*5^2*7*11*17 * Z12^3*Z2^9*Z30
-2^3*3^7*5*7*11*41*53 * Z20*Z12^4*Z2^8
+2^5*3^9*5^3*7 * Z2^12*Z30^2
+2^3*3^8*5*7*307 * Z20*Z12*Z2^11*Z30
+2^3*3^8*7*11*6089 * Z20^2*Z12^2*Z2^10
+2^5*3^6*7*97*307 * Z20^3*Z2^12
-2^4*3^5*7^3*13*277 * Z12^5*Z2^12
-2^5*3^6*5*7*17*23 * Z12^2*Z2^15*Z30
+2^5*3^5*5^2*7*13*139 * Z20*Z12^3*Z2^14
-2^4*3^6*5*7*17*19 * Z20*Z2^17*Z30
+2^4*3^7*5*7*3011 * Z20^2*Z12*Z2^16
-2^5*3^2*5^2*7^2*37*71 * Z12^4*Z2^18
-2^4*3^5*5*7*13*17 * Z12*Z2^21*Z30
+2^5*3^2*5*7^2*23*541 * Z20*Z12^2*Z2^20
+2^6*3^2*5*7*47*137 * Z20^2*Z2^22
-2^8*5^2*7*11*127 * Z12^3*Z2^24
-2^5*3*5*7*13*17 * Z2^27*Z30
+2^7*3*5*7*4793 * Z20*Z12*Z2^26
-2^7*3^2*5^2*7^2 * Z12^2*Z2^30
+2^7*3^2*5^2*7^2 * Z20*Z2^32
)

Y20=
1/(2^54*3^22*5^4*7^20)*(
-2^5*3^20*5^8*7 * Z20*Z30^4
+3^18*5^11*7*11 * Z20^4*Z30^2
-2^4*3^16*5^13 * Z20^7
-2^6*3^17*5^7*7*13 * Z12^4*Z2*Z30^3
+2^8*3^16*5^9*7 * Z20*Z12^5*Z30^2
-2^7*3^18*5^8*7 * Z12*Z2^4*Z30^4
+2^7*3^17*5^7*7*107 * Z20*Z12^2*Z2^3*Z30^3
+2^9*3^16*5^10*7 * Z20^2*Z12^3*Z2^2*Z30^2
+2^3*3^15*5^10*7*271 * Z20^3*Z12^4*Z2*Z30
+2^3*3^14*5^11*7*13 * Z20^4*Z12^5
+2^6*3^17*5^6*7*467 * Z20^2*Z2^5*Z30^3
+2^4*3^16*5^10*7*71 * Z20^3*Z12*Z2^4*Z30^2
+2^5*3^15*5^10*7*53 * Z20^4*Z12^2*Z2^3*Z30
+2^4*3^14*5^12*7*13 * Z20^5*Z12^3*Z2^2
-2^3*3^15*5^9*7*389 * Z20^5*Z2^5*Z30
+2^11*3^13*5^8*7*11 * Z12^9*Z2*Z30
-2^3*3^17*5^12*7 * Z20^6*Z12*Z2^4
+2^13*3^12*5^9*7 * Z20*Z12^10
+2^12*3^14*5^7*7^2 * Z12^6*Z2^4*Z30^2
+2^4*3^14*5^7*7*859 * Z20*Z12^7*Z2^3*Z30
+2^5*3^13*5^10*7*103 * Z20^2*Z12^8*Z2^2
+2^7*3^15*5^8*7*83 * Z12^3*Z2^7*Z30^3
+2^6*3^16*5^5*7^3*59 * Z20*Z12^4*Z2^6*Z30^2
+2^4*3^14*5^10*7^3 * Z20^2*Z12^5*Z2^5*Z30
-2^5*3^12*5^8*7^2*2917 * Z20^3*Z12^6*Z2^4
-2^7*3^17*5^7*7 * Z2^10*Z30^4
+2^6*3^15*5^6*7*41*101 * Z20*Z12*Z2^9*Z30^3
+2^6*3^15*5^5*7*241*421 * Z20^2*Z12^2*Z2^8*Z30^2
+2^6*3^13*5^7*7*23*1031 * Z20^3*Z12^3*Z2^7*Z30
+2^5*3^14*5^8*7^2*479 * Z20^4*Z12^4*Z2^6
+2^4*3^14*5^4*7*77983 * Z20^3*Z2^10*Z30^2
-2^5*3^15*5^8*7*11*89 * Z20^4*Z12*Z2^9*Z30
-2^5*3^14*5^8*7*3917 * Z20^5*Z12^2*Z2^8
-2^5*3^12*5^7*7^2*11^2 * Z12^11*Z2^4
+2^6*3^13*5^5*7*23*2953 * Z12^8*Z2^7*Z30
+2^6*3^13*5^7*7^2*79 * Z20^6*Z2^10
+2^10*3^11*5^6*7^2*23*89 * Z20*Z12^9*Z2^6
-2^7*3^16*5^5*7^3*41 * Z12^5*Z2^10*Z30^2
-2^5*3^13*5^5*7^3*37*109 * Z20*Z12^6*Z2^9*Z30
-2^5*3^12*5^5*7*901679 * Z20^2*Z12^7*Z2^8
+2^9*3^15*5^6*7^2*23 * Z12^2*Z2^13*Z30^3
+2^8*3^14*5^5*7^2*13*127 * Z20*Z12^3*Z2^12*Z30^2
+2^6*3^13*5^3*7^2*171827 * Z20^2*Z12^4*Z2^11*Z30
+2^6*3^16*5^5*7^2*277 * Z20^3*Z12^5*Z2^10
+2^7*3^14*5^5*7^2*241 * Z20*Z2^15*Z30^3
+2^6*3^18*5^4*7^2*103 * Z20^2*Z12*Z2^14*Z30^2
-2^6*3^14*5^3*7^2*11*13*29*101 * Z20^3*Z12^2*Z2^13*Z30
-2^7*3^12*5^5*7^2*137*599 * Z20^4*Z12^3*Z2^12
-2^5*3^13*5^2*7^3*111491 * Z20^4*Z2^15*Z30
+2^7*3^13*5^6*7^2*523 * Z20^5*Z12*Z2^14
+2^6*3^13*5^4*7*42589 * Z12^10*Z2^10
+2^6*3^10*5^3*7*17*2056963 * Z12^7*Z2^13*Z30
+2^11*3^9*5^3*7*11*295909 * Z20*Z12^8*Z2^12
-2^8*3^11*5^4*7^2*13*19*211 * Z12^4*Z2^16*Z30^2
-2^6*3^10*5^3*7^2*11*318431 * Z20*Z12^5*Z2^15*Z30
-2^6*3^9*5^3*7^2*191*38569 * Z20^2*Z12^6*Z2^14
+2^8*3^11*5^5*7^2*1621 * Z12*Z2^19*Z30^3
+2^9*3^10*5^5*7^2*11*233 * Z20*Z12^2*Z2^18*Z30^2
-2^7*3^10*5^3*7^2*5484503 * Z20^2*Z12^3*Z2^17*Z30
-2^11*3^10*5*7^2*11448077 * Z20^3*Z12^4*Z2^16
-2^5*3^10*5^3*7^2*163*1367 * Z20^2*Z2^20*Z30^2
-2^8*3^9*5^2*7^2*12136211 * Z20^3*Z12*Z2^19*Z30
+2^7*3^8*5*7^2*409*701*811 * Z20^4*Z12^2*Z2^18
+2^7*3^8*7^2*283*759631 * Z20^5*Z2^20
+2^7*3^10*5^2*7^2*1480541 * Z12^9*Z2^16
+2^7*3^7*5^2*7^3*14996291 * Z12^6*Z2^19*Z30
+2^8*3^6*5*7^2*798329969 * Z20*Z12^7*Z2^18
-2^14*3^8*5^4*7^2*3461 * Z12^3*Z2^22*Z30^2
-2^8*3^7*5^2*7^2*661*93463 * Z20*Z12^4*Z2^21*Z30
-2^7*3^8*5*7^3*751*81553 * Z20^2*Z12^5*Z2^20
+2^9*3^10*5^3*7^2*1621 * Z2^25*Z30^3
-2^8*3^8*5^3*7^2*13*13597 * Z20*Z12*Z2^24*Z30^2
-2^12*3^9*5^2*7^2*601^2 * Z20^2*Z12^2*Z2^23*Z30
-2^13*3^6*5*7^2*701*3121 * Z20^3*Z12^3*Z2^22
-2^7*3^7*5^2*7^2*17*241877 * Z20^3*Z2^25*Z30
+2^8*3^7*5*7^2*95436673 * Z20^4*Z12*Z2^24
+2^8*3^6*5^2*7^3*4115879 * Z12^8*Z2^22
+2^10*3^7*5*7^2*8375963 * Z12^5*Z2^25*Z30
-2^13*3^5*5*7^2*131*233*331 * Z20*Z12^6*Z2^24
-2^10*3^9*5^3*7^2*11*743 * Z12^2*Z2^28*Z30^2
-2^8*3^6*5^2*7^2*28005389 * Z20*Z12^3*Z2^27*Z30
-2^8*3^7*5^3*7^3*402503 * Z20^2*Z12^4*Z2^26
-2^8*3^7*5^3*7^2*11*29*79 * Z20*Z2^30*Z30^2
-2^8*3^7*5^2*7^2*199*10889 * Z20^2*Z12*Z2^29*Z30
+2^9*3^6*5^3*7^2*421*5749 * Z20^3*Z12^2*Z2^28
+2^10*3^5*5^2*7^2*11*123191 * Z20^4*Z2^30
+2^9*3^6*5^2*7^3*17*139*157 * Z12^7*Z2^28
+2^9*3^4*5^3*7^2*41*25867 * Z12^4*Z2^31*Z30
-2^12*3^3*5^2*7^3*13*162263 * Z20*Z12^5*Z2^30
-2^10*3^5*5^3*7^2*71*409 * Z12*Z2^34*Z30^2
-2^9*3^4*5^2*7^2*19*127*3109 * Z20*Z12^2*Z2^33*Z30
+2^9*3^3*5^2*7^2*607*32801 * Z20^2*Z12^3*Z2^32
-2^12*3^4*5^2*7^2*173*241 * Z20^2*Z2^35*Z30
+2^10*3^5*5^2*7^2*11*118493 * Z20^3*Z12*Z2^34
+2^10*3^5*5^4*7^2*17*751 * Z12^6*Z2^34
+2^10*3*5^4*7^2*13^2*439 * Z12^3*Z2^37*Z30
-2^11*5^3*7^2*17*901141 * Z20*Z12^4*Z2^36
-2^8*3^4*5^2*7^2*71*409 * Z2^40*Z30^2
-2^10*3*5^2*7^2*11*218947 * Z20*Z12*Z2^39*Z30
+2^10*3*5^2*7^2*53*430499 * Z20^2*Z12^2*Z2^38
+2^12*5^3*7^3*11*4021 * Z20^3*Z2^40
+2^12*5^5*7^3*853 * Z12^5*Z2^40
+2^11*3*5^3*7^3*13*17 * Z12^2*Z2^43*Z30
-2^14*5^3*7^3*10867 * Z20*Z12^3*Z2^42
-2^11*3*5^3*7^3*13*17 * Z20*Z2^45*Z30
+2^12*3*5^3*7^3*11^2*61 * Z20^2*Z12*Z2^44
+2^12*3^2*5^4*7^4 * Z12^4*Z2^46
-2^13*3^2*5^4*7^4 * Z20*Z12^2*Z2^48
+2^12*3^2*5^4*7^4 * Z20^2*Z2^50
)

Y30=
1/(2^79*3^34*5^6*7^30)*(
-2^7*3^31*5^12 * Z30^7
+2^4*3^30*5^12*7*107 * Z20^3*Z30^5
-3^28*5^16*7*71 * Z20^6*Z30^3
+2^9*3^28*5^10*7*23 * Z12^5*Z30^5
+2^4*3^25*5^19*7 * Z20^9*Z30
+2^10*3^29*5^12*7 * Z12^2*Z2^3*Z30^6
+2^10*3^28*5^11*7*23 * Z20*Z12^3*Z2^2*Z30^5
+2^5*3^27*5^12*7*1129 * Z20^2*Z12^4*Z2*Z30^4
-2^6*3^26*5^14*7*41 * Z20^3*Z12^5*Z30^3
+2^11*3^29*5^11*7 * Z20*Z2^5*Z30^6
+2^6*3^28*5^11*7*1789 * Z20^2*Z12*Z2^4*Z30^5
-2^6*3^27*5^12*7*2081 * Z20^3*Z12^2*Z2^3*Z30^4
-2^7*3^26*5^15*7*41 * Z20^4*Z12^3*Z2^2*Z30^3
-2^2*3^25*5^15*7*5393 * Z20^5*Z12^4*Z2*Z30^2
-2^2*3^24*5^18*7*11 * Z20^6*Z12^5*Z30
-2^5*3^27*5^11*7^3*239 * Z20^4*Z2^5*Z30^4
-2^3*3^26*5^15*7^2*199 * Z20^5*Z12*Z2^4*Z30^3
-2^15*3^24*5^12*7 * Z12^10*Z30^3
-2^4*3^25*5^15*7^2*157 * Z20^6*Z12^2*Z2^3*Z30^2
-2^3*3^24*5^19*7*11 * Z20^7*Z12^3*Z2^2*Z30
+2^4*3^23*5^18*7*23 * Z20^8*Z12^4*Z2
+2^7*3^25*5^10*7*1039 * Z12^7*Z2^3*Z30^4
+2^7*3^25*5^11*7*71*127 * Z20*Z12^8*Z2^2*Z30^3
+2^2*3^25*5^14*7*83*89 * Z20^7*Z2^5*Z30^2
+2^9*3^23*5^13*7*13*73 * Z20^2*Z12^9*Z2*Z30^2
+2^2*3^24*5^19*7*37 * Z20^8*Z12*Z2^4*Z30
+2^11*3^22*5^15*7*23 * Z20^3*Z12^10*Z30
+2^5*3^24*5^18*7 * Z20^9*Z12^2*Z2^3
-2^9*3^26*5^10*7^2*373 * Z12^4*Z2^6*Z30^5
-2^7*3^26*5^9*7^2*59*163 * Z20*Z12^5*Z2^5*Z30^4
-2^7*3^26*5^11*7^2*1123 * Z20^2*Z12^6*Z2^4*Z30^3
+2^3*3^23*5^12*7*11*263*349 * Z20^3*Z12^7*Z2^3*Z30^2
+2^3*3^23*5^14*7*22811 * Z20^4*Z12^8*Z2^2*Z30
-2^4*3^24*5^17*7*19 * Z20^10*Z2^5
+2^6*3^21*5^16*7*17*37 * Z20^5*Z12^9*Z2
+2^9*3^28*5^11*7^2 * Z12*Z2^9*Z30^6
+2^10*3^27*5^10*7^2*59 * Z20*Z12^2*Z2^8*Z30^5
-2^6*3^26*5^10*7^2*36697 * Z20^2*Z12^3*Z2^7*Z30^4
-2^5*3^24*5^10*7^2*491*1493 * Z20^3*Z12^4*Z2^6*Z30^3
-2^3*3^24*5^12*7^2*24977 * Z20^4*Z12^5*Z2^5*Z30^2
+2^4*3^24*5^13*7^2*11*17*107 * Z20^5*Z12^6*Z2^4*Z30
+2^19*3^19*5^13*7 * Z12^15*Z30
+2^4*3^21*5^15*7^2*2053 * Z20^6*Z12^7*Z2^3
+2^6*3^26*5^9*7^2*23*31 * Z20^2*Z2^10*Z30^5
-2^5*3^25*5^12*7^2*19*307 * Z20^3*Z12*Z2^9*Z30^4
-2^6*3^25*5^10*7^2*59*4219 * Z20^4*Z12^2*Z2^8*Z30^3
-2^5*3^24*5^12*7^4*373 * Z20^5*Z12^3*Z2^7*Z30^2
+2^10*3^20*5^11*7^2*7517 * Z12^12*Z2^3*Z30^2
+2^10*3^22*5^13*7^2*197 * Z20^6*Z12^4*Z2^6*Z30
+2^12*3^20*5^12*7^2*677 * Z20*Z12^13*Z2^2*Z30
+2^4*3^24*5^13*7^2*19087 * Z20^7*Z12^5*Z2^5
+2^14*3^19*5^14*7*107 * Z20^2*Z12^14*Z2
+2^3*3^24*5^9*7^2*805289 * Z20^5*Z2^10*Z30^3
-2^8*3^22*5^9*7^2*1559 * Z12^9*Z2^6*Z30^3
+2^4*3^23*5^13*7^2*46549 * Z20^6*Z12*Z2^9*Z30^2
-2^9*3^22*5^10*7^2*31*1049 * Z20*Z12^10*Z2^5*Z30^2
+2^4*3^24*5^13*7*290527 * Z20^7*Z12^2*Z2^8*Z30
-2^4*3^20*5^11*7^2*17*29^2*31 * Z20^2*Z12^11*Z2^4*Z30
+2^4*3^23*5^15*7*12569 * Z20^8*Z12^3*Z2^7
-2^13*3^18*5^13*7^2*683 * Z20^3*Z12^12*Z2^3
+2^8*3^22*5^9*7^5*977 * Z12^6*Z2^9*Z30^4
+2^8*3^23*5^8*7^2*139*8543 * Z20*Z12^7*Z2^8*Z30^3
+2^5*3^22*5^9*7^2*509*32057 * Z20^2*Z12^8*Z2^7*Z30^2
+2^3*3^24*5^12*7*31*3581 * Z20^8*Z2^10*Z30
+2^5*3^19*5^11*7^2*521*4133 * Z20^3*Z12^9*Z2^6*Z30
-2^4*3^22*5^16*7*1453 * Z20^9*Z12*Z2^9
+2^5*3^19*5^11*7^2*31*193*983 * Z20^4*Z12^10*Z2^5
-2^11*3^23*5^11*7^3*17 * Z12^3*Z2^12*Z30^5
-2^9*3^23*5^8*7^4*11*13^3 * Z20*Z12^4*Z2^11*Z30^4
+2^6*3^22*5^7*7^3*9993337 * Z20^2*Z12^5*Z2^10*Z30^3
+2^4*3^21*5^9*7^3*44112559 * Z20^3*Z12^6*Z2^9*Z30^2
+2^4*3^21*5^10*7^2*7079*11617 * Z20^4*Z12^7*Z2^8*Z30
-2^5*3^22*5^12*7^2*41851 * Z20^5*Z12^8*Z2^7
-2^15*3^18*5^12*7*73 * Z12^17*Z2^3
+2^10*3^26*5^10*7^2 * Z2^15*Z30^6
+2^9*3^29*5^9*7^2 * Z20*Z12*Z2^14*Z30^5
-2^10*3^23*5^8*7^3*31*3469 * Z20^2*Z12^2*Z2^13*Z30^4
-2^7*3^21*5^8*7^3*173*71471 * Z20^3*Z12^3*Z2^12*Z30^3
+2^12*3^22*5^8*7^3*23*9043 * Z20^4*Z12^4*Z2^11*Z30^2
+2^5*3^22*5^10*7^4*271771 * Z20^5*Z12^5*Z2^10*Z30
+2^5*3^19*5^10*7*8052413 * Z12^14*Z2^6*Z30
+2^6*3^20*5^11*7^3*11*31*4391 * Z20^6*Z12^6*Z2^9
+2^8*3^18*5^11*7*19*193*211 * Z20*Z12^15*Z2^5
-2^6*3^22*5^7*7^2*11*1070429 * Z20^3*Z2^15*Z30^4
+2^5*3^22*5^8*7^2*17*83*10627 * Z20^4*Z12*Z2^14*Z30^3
+2^5*3^23*5^8*7^3*15042373 * Z20^5*Z12^2*Z2^13*Z30^2
+2^8*3^21*5^8*7^2*11*149*457 * Z12^11*Z2^9*Z30^2
+2^11*3^21*5^10*7^4*11*2539 * Z20^6*Z12^3*Z2^12*Z30
+2^6*3^24*5^9*7^2*67*2027 * Z20*Z12^12*Z2^8*Z30
+2^7*3^22*5^11*7^2*43*19681 * Z20^7*Z12^4*Z2^11
+2^6*3^18*5^10*7^2*6673981 * Z20^2*Z12^13*Z2^7
-2^8*3^21*5^7*7^2*13*17*113*389 * Z12^8*Z2^12*Z30^3
+2^4*3^22*5^7*7^2*383*208697 * Z20^6*Z2^15*Z30^2
-2^12*3^20*5^7*7^2*53*197*421 * Z20*Z12^9*Z2^11*Z30^2
+2^5*3^23*5^11*7*89*13921 * Z20^7*Z12*Z2^14*Z30
-2^8*3^19*5^9*7^2*11*1327889 * Z20^2*Z12^10*Z2^10*Z30
-2^5*3^22*5^11*7^2*13*43*47 * Z20^8*Z12^2*Z2^13
+2^6*3^20*5^9*7^2*61*151*1609 * Z20^3*Z12^11*Z2^9
-2^16*3^24*5^8*7^2*43 * Z12^5*Z2^15*Z30^4
+2^8*3^25*5^7*7^2*1150987 * Z20*Z12^6*Z2^14*Z30^3
+2^5*3^20*5^6*7^2*523*56985457 * Z20^2*Z12^7*Z2^13*Z30^2
+2^6*3^19*5^7*7^2*13*427218427 * Z20^3*Z12^8*Z2^12*Z30
-2^5*3^22*5^10*7*257*5003 * Z20^9*Z2^15
-2^6*3^19*5^9*7^2*127*1214933 * Z20^4*Z12^9*Z2^11
-2^9*3^22*5^9*7^2*19*241 * Z12^2*Z2^18*Z30^5
-2^9*3^22*5^8*7^2*1432103 * Z20*Z12^3*Z2^17*Z30^4
+2^7*3^21*5^6*7^2*37*47*272369 * Z20^2*Z12^4*Z2^16*Z30^3
+2^5*3^20*5^5*7^2*229*28477*38953 * Z20^3*Z12^5*Z2^15*Z30^2
+2^5*3^19*5^7*7^2*281*109191547 * Z20^4*Z12^6*Z2^14*Z30
+2^6*3^18*5^8*7^2*11*269192369 * Z20^5*Z12^7*Z2^13
-2^7*3^18*5^10*7^2*29989 * Z12^16*Z2^9
-2^8*3^22*5^8*7^2*3361 * Z20*Z2^20*Z30^5
-2^7*3^21*5^7*7^2*24413693 * Z20^2*Z12*Z2^19*Z30^4
+2^11*3^20*5^6*7^2*257*257903 * Z20^3*Z12^2*Z2^18*Z30^3
+2^6*3^20*5^7*7^2*101*1129*50101 * Z20^4*Z12^3*Z2^17*Z30^2
+2^6*3^19*5^6*7^2*389*631*747853 * Z20^5*Z12^4*Z2^16*Z30
+2^7*3^17*5^8*7^3*11*2122963 * Z12^13*Z2^12*Z30
+2^8*3^19*5^8*7^2*223*1931*2789 * Z20^6*Z12^5*Z2^15
+2^7*3^18*5^8*7^2*239*130099 * Z20*Z12^14*Z2^11
+2^4*3^20*5^5*7^2*12569*345689 * Z20^4*Z2^20*Z30^3
+2^8*3^19*5^6*7^2*11177*205211 * Z20^5*Z12*Z2^19*Z30^2
+2^11*3^19*5^6*7^2*19*181213 * Z12^10*Z2^15*Z30^2
+2^6*3^18*5^6*7^2*191*567548017 * Z20^6*Z12^2*Z2^18*Z30
+2^7*3^19*5^6*7^2*18859*65519 * Z20*Z12^11*Z2^14*Z30
+2^7*3^19*5^8*7*263*347*30803 * Z20^7*Z12^3*Z2^17
+2^8*3^16*5^9*7^3*32820167 * Z20^2*Z12^12*Z2^13
+2^11*3^18*5^6*7^2*130881071 * Z12^7*Z2^18*Z30^3
+2^8*3^19*5^5*7^2*1229*1304753 * Z20*Z12^8*Z2^17*Z30^2
+2^6*3^18*5^5*7*13159*5789947 * Z20^7*Z2^20*Z30
-2^6*3^17*5^5*7^2*213751052731 * Z20^2*Z12^9*Z2^16*Z30
-2^10*3^17*5^10*7*11*1529413 * Z20^8*Z12*Z2^19
-2^9*3^16*5^6*7^2*173*127972109 * Z20^3*Z12^10*Z2^15
-2^9*3^19*5^7*7^2*37*97*4337 * Z12^4*Z2^21*Z30^4
+2^9*3^20*5^6*7^5*1018301 * Z20*Z12^5*Z2^20*Z30^3
+2^6*3^17*5^7*7^4*157*6334547 * Z20^2*Z12^6*Z2^19*Z30^2
+2^8*3^16*5^4*7^2*19*2417*2551*12037 * Z20^3*Z12^7*Z2^18*Z30
-2^7*3^17*5^5*7^2*89*191*17853503 * Z20^4*Z12^8*Z2^17
-2^10*3^21*5^8*7^2*2137 * Z12*Z2^24*Z30^5
-2^10*3^21*5^7*7^2*17*227*347 * Z20*Z12^2*Z2^23*Z30^4
+2^10*3^18*5^6*7^2*23*45166171 * Z20^2*Z12^3*Z2^22*Z30^3
+2^6*3^20*5^4*7^2*29*37*324827941 * Z20^3*Z12^4*Z2^21*Z30^2
+2^7*3^18*5^3*7^3*11*131*29027*48571 * Z20^4*Z12^5*Z2^20*Z30
+2^7*3^15*5^5*7^3*31*23099*1885459 * Z20^5*Z12^6*Z2^19
-2^10*3^17*5^5*7^3*827*14221 * Z12^15*Z2^15
-2^8*3^19*5^6*7^2*13928549 * Z20^2*Z2^25*Z30^4
+2^8*3^18*5^5*7^2*11*103*421*8707 * Z20^3*Z12*Z2^24*Z30^3
+2^12*3^19*5^4*7^3*13*47560897 * Z20^4*Z12^2*Z2^23*Z30^2
+2^7*3^16*5^4*7^2*13*719*1783*669241 * Z20^5*Z12^3*Z2^22*Z30
+2^7*3^14*5^5*7^2*548100080477 * Z12^12*Z2^18*Z30
+2^8*3^15*5^4*7^3*1780283701487 * Z20^6*Z12^4*Z2^21
+2^10*3^16*5^7*7^2*13*101*265459 * Z20*Z12^13*Z2^17
+2^7*3^17*5^2*7^2*709*7477*86131 * Z20^5*Z2^25*Z30^2
-2^16*3^16*5^6*7^2*13832521 * Z12^9*Z2^21*Z30^2
+2^6*3^16*5^4*7^2*13*23*4262123611 * Z20^6*Z12*Z2^24*Z30
+2^9*3^15*5^4*7^2*1033*1979*13001 * Z20*Z12^10*Z2^20*Z30
-2^10*3^18*5^4*7^2*32051*182159 * Z20^7*Z12^2*Z2^23
+2^8*3^14*5^4*7^3*11*19641133387 * Z20^2*Z12^11*Z2^19
+2^10*3^17*5^5*7^2*17*189716399 * Z12^6*Z2^24*Z30^3
+2^7*3^16*5^4*7^2*101*5777699071 * Z20*Z12^7*Z2^23*Z30^2
-2^8*3^16*5^3*7^2*4824842354177 * Z20^2*Z12^8*Z2^22*Z30
-2^9*3^16*5^2*7^2*13*21990109487 * Z20^8*Z2^25
-2^8*3^13*5^3*7^2*23*5623*1411172599 * Z20^3*Z12^9*Z2^21
-2^10*3^17*5^7*7^2*31*439667 * Z12^3*Z2^27*Z30^4
+2^9*3^17*5^5*7^3*13*19^2*173*2281 * Z20*Z12^4*Z2^26*Z30^3
+2^7*3^20*5^5*7^2*9184353733 * Z20^2*Z12^5*Z2^25*Z30^2
+2^12*3^15*5^3*7^2*2093780213323 * Z20^3*Z12^6*Z2^24*Z30
+2^9*3^14*5^2*7^2*29*101*33115141459 * Z20^4*Z12^7*Z2^23
-2^10*3^19*5^7*7^2*2137 * Z2^30*Z30^5
-2^9*3^19*5^6*7^2*19*321821 * Z20*Z12*Z2^29*Z30^4
+2^11*3^17*5^6*7^2*17*13839599 * Z20^2*Z12^2*Z2^28*Z30^3
+2^7*3^16*5^6*7^2*47*79^2*302009 * Z20^3*Z12^3*Z2^27*Z30^2
+2^7*3^15*5^2*7^3*29*101*367*3593*15277 * Z20^4*Z12^4*Z2^26*Z30
+2^9*3^15*7^2*11*5903837*31733797 * Z20^5*Z12^5*Z2^25
+2^9*3^13*5^5*7^3*569*977*1553 * Z12^14*Z2^21
+2^8*3^16*5^4*7^2*11*554789143 * Z20^3*Z2^30*Z30^3
+2^7*3^16*5^3*7^2*3307*5749*40763 * Z20^4*Z12*Z2^29*Z30^2
+2^10*3^18*5^2*7^2*13*283*613*122117 * Z20^5*Z12^2*Z2^28*Z30
+2^8*3^13*5^3*7^2*97*120517321303 * Z12^11*Z2^24*Z30
+2^8*3^15*5*7^2*2351*59753*315527 * Z20^6*Z12^3*Z2^27
+2^9*3^13*5^3*7^3*1019375392639 * Z20*Z12^12*Z2^23
-2^9*3^15*5^4*7^2*229*382783829 * Z12^8*Z2^27*Z30^2
+2^8*3^15*5^2*7^4*53*257*695689 * Z20^6*Z2^30*Z30
-2^14*3^13*5^6*7^2*79*761*46691 * Z20*Z12^9*Z2^26*Z30
-2^8*3^15*5*7^3*353*5954683957 * Z20^7*Z12*Z2^29
-2^10*3^12*5^2*7^2*29*59*149*157*1856221 * Z20^2*Z12^10*Z2^25
+2^10*3^18*5^3*7^3*359*3061027 * Z12^5*Z2^30*Z30^3
+2^8*3^15*5^3*7^3*340604380033 * Z20*Z12^6*Z2^29*Z30^2
-2^13*3^14*5^2*7^2*139*39383*152777 * Z20^2*Z12^7*Z2^28*Z30
-2^12*3^13*5*7^2*63378965258321 * Z20^3*Z12^8*Z2^27
-2^12*3^16*5^6*7^4*19*4483 * Z12^2*Z2^33*Z30^4
+2^11*3^16*5^5*7^4*41*563*1201 * Z20*Z12^3*Z2^32*Z30^3
+2^8*3^15*5^3*7^3*31*22664066651 * Z20^2*Z12^4*Z2^31*Z30^2
+2^10*3^14*5^2*7^3*541*569*16524979 * Z20^3*Z12^5*Z2^30*Z30
+2^9*3^13*5*7^3*41*3048495311723 * Z20^4*Z12^6*Z2^29
-2^10*3^16*5^5*7^3*19*56501 * Z20*Z2^35*Z30^4
+2^11*3^15*5^4*7^3*11*13*3002269 * Z20^2*Z12*Z2^34*Z30^3
+2^8*3^18*5^3*7^4*812189933 * Z20^3*Z12^2*Z2^33*Z30^2
+2^8*3^14*5^2*7^3*19*235679*1825937 * Z20^4*Z12^3*Z2^32*Z30
+2^9*3^14*5^2*7^3*149*18380304779 * Z20^5*Z12^4*Z2^31
+2^10*3^12*5^2*7^2*13*911*164646281 * Z12^13*Z2^27
+2^8*3^16*5^3*7^3*15919*40127 * Z20^4*Z2^35*Z30^2
+2^15*3^16*5^2*7^3*365440423 * Z20^5*Z12*Z2^34*Z30
+2^9*3^11*5*7^2*11*1399*23867817047 * Z12^10*Z2^30*Z30
-2^10*3^14*5*7^3*12653*77512949 * Z20^6*Z12^2*Z2^33
+2^10*3^12*5*7^2*6073*7561*1419713 * Z20*Z12^11*Z2^29
-2^9*3^12*5^3*7^2*41*673*101739863 * Z12^7*Z2^33*Z30^2
-2^12*3^12*5^2*7^2*177823*45043877 * Z20*Z12^8*Z2^32*Z30
-2^12*3^17*5*7^3*29^2*37*101*139 * Z20^7*Z2^35
-2^11*3^10*5^3*7^2*43*139*5977447793 * Z20^2*Z12^9*Z2^31
+2^11*3^12*5^4*7^3*13^2*70718519 * Z12^4*Z2^36*Z30^3
+2^9*3^16*5^3*7^3*9043*1393387 * Z20*Z12^5*Z2^35*Z30^2
+2^10*3^12*5^2*7^3*67763*24003367 * Z20^2*Z12^6*Z2^34*Z30
+2^12*3^10*5^3*7^3*223*2486599937 * Z20^3*Z12^7*Z2^33
-2^11*3^15*5^5*7^3*684121 * Z12*Z2^39*Z30^4
+2^12*3^13*5^4*7^3*73*353*36683 * Z20*Z12^2*Z2^38*Z30^3
+2^9*3^11*5^3*7^4*349021690219 * Z20^2*Z12^3*Z2^37*Z30^2
+2^11*3^10*5^2*7^3*11*587*3853542631 * Z20^3*Z12^4*Z2^36*Z30
+2^10*3^11*5^2*7^4*11*198879218137 * Z20^4*Z12^5*Z2^35
+2^8*3^12*5^4*7^4*41*419*15139 * Z20^2*Z2^40*Z30^3
+2^9*3^11*5^3*7^3*170173386991 * Z20^3*Z12*Z2^39*Z30^2
+2^9*3^11*5^2*7^3*47*8731*12540581 * Z20^4*Z12^2*Z2^38*Z30
-2^10*3^9*5^2*7^3*631*16453*714151 * Z20^5*Z12^3*Z2^37
+2^11*3^12*5^2*7^3*19*3630744617 * Z12^12*Z2^33
+2^11*3^10*5*7^4*11*3717589009 * Z20^5*Z2^40*Z30
+2^10*3^8*5^4*7^3*17*643*719*109903 * Z12^9*Z2^36*Z30
-2^11*3^9*5*7^3*4133*7243*304907 * Z20^6*Z12*Z2^39
+2^11*3^10*5^2*7^3*11*12131767093 * Z20*Z12^10*Z2^35
-2^10*3^9*5^3*7^3*83*29297*268283 * Z12^6*Z2^39*Z30^2
-2^14*3^11*5^2*7^3*139491370861 * Z20*Z12^7*Z2^38*Z30
-2^12*3^9*5^2*7^3*17^2*19861272577 * Z20^2*Z12^8*Z2^37
+2^18*3^10*5^4*7^3*1619*44357 * Z12^3*Z2^42*Z30^3
+2^12*3^11*5^3*7^3*151*282910193 * Z20*Z12^4*Z2^41*Z30^2
+2^10*3^10*5^2*7^3*173*9311*2585293 * Z20^2*Z12^5*Z2^40*Z30
+2^12*3^7*5^2*7^3*45834651100351 * Z20^3*Z12^6*Z2^39
-2^12*3^12*5^4*7^3*684121 * Z2^45*Z30^4
+2^11*3^12*5^4*7^3*71*83*179*193 * Z20*Z12*Z2^44*Z30^3
+2^10*3^11*5^3*7^4*31*37*1019*6521 * Z20^2*Z12^2*Z2^43*Z30^2
+2^11*3^8*5^2*7^3*139*101441196979 * Z20^3*Z12^3*Z2^42*Z30
+2^11*3^9*5^2*7^3*351151*6647521 * Z20^4*Z12^4*Z2^41
+2^14*3^9*5^3*7^3*1327*309251 * Z20^3*Z2^45*Z30^2
+2^12*3^10*5^2*7^3*17*41*4937*13249 * Z20^4*Z12*Z2^44*Z30
-2^11*3^10*5^2*7^4*50776597931 * Z20^5*Z12^2*Z2^43
+2^12*3^7*5^4*7^4*29*43*281*31153 * Z12^11*Z2^39
+2^13*3^7*5^3*7^3*109*181*7354283 * Z12^8*Z2^42*Z30
-2^12*3^8*5^2*7^3*607*90804557 * Z20^6*Z2^45
-2^12*3^7*5^3*7^3*227*10651*279967 * Z20*Z12^9*Z2^41
-2^15*3^8*5^2*7^3*53^2*7401907 * Z12^5*Z2^45*Z30^2
-2^12*3^7*5^2*7^3*17*235269698801 * Z20*Z12^6*Z2^44*Z30
-2^12*3^6*5^2*7^3*33851*133712119 * Z20^2*Z12^7*Z2^43
+2^15*3^9*5^4*7^3*163*179*2917 * Z12^2*Z2^48*Z30^3
+2^11*3^8*5^3*7^3*12577*11233711 * Z20*Z12^3*Z2^47*Z30^2
+2^12*3^7*5^3*7^3*13*29^2*59*850691 * Z20^2*Z12^4*Z2^46*Z30
+2^12*3^7*5^3*7^3*11*82987837567 * Z20^3*Z12^5*Z2^45
+2^11*3^9*5^4*7^3*41*1902613 * Z20*Z2^50*Z30^3
+2^11*3^8*5^3*7^3*311*3449*13781 * Z20^2*Z12*Z2^49*Z30^2
+2^12*3^7*5^4*7^4*35851*115807 * Z20^3*Z12^2*Z2^48*Z30
-2^13*3^8*5^3*7^3*31084535837 * Z20^4*Z12^3*Z2^47
+2^12*3^7*5^3*7^3*367*6651413 * Z20^4*Z2^50*Z30
-2^13*3^6*5^4*7^3*11*1196570233 * Z20^5*Z12*Z2^49
+2^16*3^6*5^4*7^4*254505289 * Z12^10*Z2^45
+2^12*3^5*5^3*7^3*8971*15690341 * Z12^7*Z2^48*Z30
-2^14*3^6*5^3*7^5*2411*769799 * Z20*Z12^8*Z2^47
-2^12*3^6*5^4*7^3*53*1019*43013 * Z12^4*Z2^51*Z30^2
-2^13*3^6*5^3*7^3*153073*402817 * Z20*Z12^5*Z2^50*Z30
+2^16*3^4*5^3*7^3*2017*60384263 * Z20^2*Z12^6*Z2^49
+2^13*3^8*5^4*7^3*27165209 * Z12*Z2^54*Z30^3
+2^12*3^7*5^3*7^3*18637*381001 * Z20*Z12^2*Z2^53*Z30^2
+2^12*3^5*5^3*7^3*113*2195698811 * Z20^2*Z12^3*Z2^52*Z30
+2^14*3^4*5^3*7^3*199*1061*743221 * Z20^3*Z12^4*Z2^51
+2^13*3^6*5^3*7^3*227639521 * Z20^2*Z2^55*Z30^2
+2^14*3^5*5^3*7^3*11*17*31217027 * Z20^3*Z12*Z2^54*Z30
-2^15*3^7*5^3*7^3*11689*201767 * Z20^4*Z12^2*Z2^53
-2^19*3^5*5^4*7^4*659*1979 * Z20^5*Z2^55
+2^14*3^5*5^4*7^3*11*4933*28661 * Z12^9*Z2^51
+2^16*3*5^6*7^3*179854793 * Z12^6*Z2^54*Z30
-2^14*3^4*5^4*7^3*13463475413 * Z20*Z12^7*Z2^53
-2^13*3^2*5^6*7^3*11*31*226669 * Z12^3*Z2^57*Z30^2
-2^16*3^2*5^4*7^3*41*2311*38767 * Z20*Z12^4*Z2^56*Z30
+2^14*3*5^4*7^3*37*41*397*533879 * Z20^2*Z12^5*Z2^55
+2^12*3^6*5^3*7^3*27165209 * Z2^60*Z30^3
+2^13*3^7*5^3*7^3*167*350503 * Z20*Z12*Z2^59*Z30^2
+2^15*3^2*5^3*7^3*1619*2861*4909 * Z20^2*Z12^2*Z2^58*Z30
-2^14*5^3*7^3*29*283*110896741 * Z20^3*Z12^3*Z2^57
+2^15*3*5^4*7^4*1289*39023 * Z20^3*Z2^60*Z30
-2^15*3*5^4*7^4*11*13*11210723 * Z20^4*Z12*Z2^59
+2^16*5^6*7^4*23055827 * Z12^8*Z2^57
+2^18*3*5^6*7^4*13*8233 * Z12^5*Z2^60*Z30
-2^16*5^5*7^6*59*120233 * Z20*Z12^6*Z2^59
-2^14*3^2*5^4*7^4*1151659 * Z12^2*Z2^63*Z30^2
-2^17*3*5^4*7^4*17^2*47*883 * Z20*Z12^3*Z2^62*Z30
+2^16*3*5^4*7^4*29*19709777 * Z20^2*Z12^4*Z2^61
+2^14*3^2*5^4*7^4*1151659 * Z20*Z2^65*Z30^2
+2^17*3^2*5^4*7^4*11*31*43*151 * Z20^2*Z12*Z2^64*Z30
-2^16*3^3*5^4*7^4*19456027 * Z20^3*Z12^2*Z2^63
-2^17*3^2*5^5*7^5*11*4021 * Z20^4*Z2^65
+2^17*3^2*5^7*7^5*853 * Z12^7*Z2^63
+2^16*3^3*5^5*7^5*13*17 * Z12^4*Z2^66*Z30
-2^17*3^2*5^5*7^5*64793 * Z20*Z12^5*Z2^65
-2^17*3^3*5^5*7^5*13*17 * Z20*Z12^2*Z2^68*Z30
+2^17*3^2*5^5*7^7*13*103 * Z20^2*Z12^3*Z2^67
+2^16*3^3*5^5*7^5*13*17 * Z20^2*Z2^70*Z30
-2^17*3^3*5^5*7^5*11^2*61 * Z20^3*Z12*Z2^69
+2^18*3^3*5^6*7^6 * Z12^6*Z2^69
-2^18*3^4*5^6*7^6 * Z20*Z12^4*Z2^71
+2^18*3^4*5^6*7^6 * Z20^2*Z12^2*Z2^73
-2^18*3^3*5^6*7^6 * Z20^3*Z2^75
)
\end{verbatim}
}

\end{theorem}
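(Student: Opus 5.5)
The plan is to follow the route used for Lemma \ref{lemma:lemma-3} in the $H_3$ case, replacing brute-force substitution by an undetermined-coefficient computation that is organized around the weights.

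First, by Lemma \ref{lemma:lemma-H4-H4}, each $Y_j=Z_j^{\ast}(P_1,P_2,P_3,P_4)$ is a ${\cal W}(H_4)$-invariant polynomial in ${\tt u}$. Since the $P_k$ are homogeneous of degree $7$, $Y_j$ is homogeneous of degree $7j$, that is, of degree $14,84,140,210$. By Chevalley's theorem, $Y_j$ is then a polynomial in $Z_2,Z_{12},Z_{20},Z_{30}$. This polynomial is a linear combination of the monomials $Z_2^{a}Z_{12}^{b}Z_{20}^{c}Z_{30}^{d}$ with $2a+12b+20c+30d=7j$.

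I will enumerate these monomials for each $j$. For $Y_2$ only $Z_2^7$ and $Z_2Z_{12}$ occur. For $Y_{12}$, $Y_{20}$ and $Y_{30}$ the number of monomials grows to a few dozen, a few hundred, and several hundred respectively. I then write $Y_j=\sum c_{abcd}Z_2^aZ_{12}^bZ_{20}^cZ_{30}^d$ with unknown rational coefficients.

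To reduce the size of the computation, I will not expand in all four variables. Instead I will exploit the structure of (\ref{equation:H4-invariants}): every $Z$ is a polynomial in $h_2,h_6,h_{10},u_4$. It therefore suffices to compare both sides as polynomials in $(h_2,h_6,h_{10},u_4)$, which are algebraically independent. The only price is that $Y_j$ must first be expressed in these variables.

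A cheaper alternative is to determine the unknowns by evaluating both sides at many random rational or $\mathbf{Q}(\sqrt5)$ points ${\tt u}$. This produces a linear system for the $c_{abcd}$. The system has a unique solution because the monomials in the algebraically independent $Z$'s are linearly independent, and the result can then be verified symbolically.

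Concretely, the steps are as follows:
\begin{itemize}
\item[(a)] Compute $P_1,\dots,P_4$ explicitly from $f$.
\item[(b)] Compute $k_2,k_6,k_{10}$ at ${\tt v}=(P_1,\dots,P_4)$, which is the $\sqrt5\to-\sqrt5$ version of $h_2,h_6,h_{10}$, together with $v_4=P_4$, and hence $Y_j$.
\item[(c)] Evaluate at enough points and solve the linear system.
\item[(d)] Confirm the identity exactly, for instance by restricting to a generic line, or by checking the identity modulo several primes and at more points than the degree bound requires.
\end{itemize}
A useful sanity check is to apply the Jacobian argument of Lemma \ref{lemma:reflection-reflection-ast}. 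From $\det\partial(Y)/\partial(Z)\cdot D(u)=D^{\ast}(P)\cdot\det\partial(P)/\partial(u)$, the Jacobian of the computed map must be divisible by the factor coming from $\det\partial(P)/\partial(u)$, which is the analogue of $I_2-I_1^3$. The case of $Y_2$ can be checked by hand: $Z_2^{\ast}(P)=k_2(P)+P_4^2=\sum P_k^2$, a degree-$14$ invariant.

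I expect the main obstacle to be $Y_{30}$. It has degree $210$ in ${\tt u}$, and the number of candidate monomials of weighted degree $210$ is large, so the symbolic expansion of $Z_{30}^{\ast}(P)$ is enormous. I would first try the evaluation/interpolation approach over a prime field, or over several primes combined by the Chinese remainder theorem and rational reconstruction, so that the full degree-$210$ polynomial in ${\tt u}$ never has to be expanded. Throughout, $\sqrt5$ is kept as an element of $\mathbf{F}_p$ for primes $p\equiv\pm1 \pmod 5$.
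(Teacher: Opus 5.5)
Your proposal is correct and follows the same route as the paper. Lemma~\ref{lemma:lemma-H4-H4} makes each $Y_j$ a ${\cal W}(H_4)$-invariant of degree $7j$, hence a weighted-homogeneous polynomial in $Z_2,Z_{12},Z_{20},Z_{30}$, and the coefficients are then found by direct computer computation; your undetermined-coefficient/interpolation scheme is one reasonable way to do that computation.

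One remark: Chevalley's theorem already places $Y_j$ in the span of your monomials, so an exact solve over $\mathbf{Q}(\sqrt5)$ at points where the evaluation matrix is invertible is itself a complete proof. The extra check in (d) is therefore needed only after modular reconstruction, and there you should run both square roots of $5$ mod $p$, since a priori the coefficients lie only in $\mathbf{Q}(\sqrt5)$.
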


\subsection{The discriminants of ${\cal W}(H_4)$ and ${\cal W}^{\ast}(H_4)$}

Let ${\cal R}^{\ast}$ be the set $\{\varphi^{\ast}|\>\varphi\in{\cal R}\}$,
where $\varphi^{\ast}=\varphi|_{\sqrt{5}\to-\sqrt{5}}$.
Using ${\cal R}^{\ast}$, we define $D^{\ast}=\prod_{\varphi\in{\cal R}^{\ast}}\varphi$.

\begin{lemma}
For each $\varphi\in{\cal R}^{\ast}$,
$
\varphi(P_1,P_2,P_3,P_4)$ has the factor $\varphi^{\ast}$.
\end{lemma}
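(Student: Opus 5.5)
We read the statement in the form analogous to Lemma~\ref{lemma:reflection-reflection-ast}(i). Writing $\psi=\varphi^{\ast}\in{\cal R}$ for $\varphi\in{\cal R}^{\ast}$ (so that $\varphi=\psi^{\ast}$, since conjugation is an involution), the claim is: for every $\psi\in{\cal R}$, the linear form $\psi$ divides $\psi^{\ast}(P_1,P_2,P_3,P_4)$. The plan is to prove this with a reflection argument, in the same spirit as Lemma~\ref{lemma:lemma-H4-H4}, rather than by expanding 60 products.

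\emph{Step 1: extend the equivariance to all of $W(H_4)$.} Write ${\tt P}({\tt u})=(P_1({\tt u}),\dots,P_4({\tt u}))$. Lemma~\ref{lemma:lemma-H4-H4} gives ${\tt P}({\tt u}\sigma(s_j))={\tt P}({\tt u})\sigma^{\ast}(s_j)$ for the generators. Both $g\mapsto\sigma(g)$ and $g\mapsto\sigma^{\ast}(g)$ are homomorphisms, so induction on word length gives
$$
{\tt P}({\tt u}\sigma(g))={\tt P}({\tt u})\sigma^{\ast}(g)\qquad\text{for all } g\in W(H_4).
$$
Here $\sigma^{\ast}(g)$ is obtained from $\sigma(g)$ by applying $\sqrt5\mapsto-\sqrt5$ entrywise. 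This is legitimate because $\sigma^{\ast}$ is the composition of $\sigma$ with a field automorphism of ${\bf Q}(\sqrt5)$, and ${\cal W}^{\ast}(H_4)$ is by definition generated by the $\sigma^{\ast}(s_j)$.

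\emph{Step 2: identify the conjugated reflections.} Fix $\psi\in{\cal R}$ and write $\psi({\tt u})={\tt u}c^{T}$ with $c\in{\bf Q}(\sqrt5)^4$. Let $g\in W(H_4)$ be the reflection whose matrix is
$$
\sigma(g)=I-\frac{2\,c^{T}c}{c\,c^{T}},
$$
which is the reflection fixing the hyperplane $\psi=0$; the 60 forms of ${\cal R}$ are exactly the reflecting hyperplanes of ${\cal W}(H_4)$. Applying the Galois conjugation entrywise gives
$$
\sigma^{\ast}(g)=I-\frac{2\,{c^{\ast}}^{T}c^{\ast}}{c^{\ast}{c^{\ast}}^{T}}.
$$
This is the reflection in the hyperplane $\psi^{\ast}=0$. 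The denominator $c^{\ast}{c^{\ast}}^{T}$ is nonzero, being the conjugate of the nonzero number $c\,c^{T}$. Consequently $\psi^{\ast}({\tt v}\sigma^{\ast}(g))=-\psi^{\ast}({\tt v})$ for every ${\tt v}$.

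\emph{Step 3: vanishing on the hyperplane.} Take ${\tt u}$ with $\psi({\tt u})=0$, so ${\tt u}\sigma(g)={\tt u}$. By Step 1,
$$
{\tt P}({\tt u})={\tt P}({\tt u}\sigma(g))={\tt P}({\tt u})\sigma^{\ast}(g).
$$
Applying $\psi^{\ast}$ and using Step 2 gives $\psi^{\ast}({\tt P}({\tt u}))=-\psi^{\ast}({\tt P}({\tt u}))$, hence $\psi^{\ast}({\tt P}({\tt u}))=0$. Thus the polynomial $\psi^{\ast}(P_1,\dots,P_4)$ vanishes on the hyperplane $\psi=0$. Since $\psi$ is a nonzero linear form, hence irreducible, the polynomial is divisible by $\psi$. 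This holds for each of the 60 forms, which proves the lemma in the reading above. The quotients are polynomials of degree 6, the analogues of the $q_j$ in the $H_3$ case.

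The only step needing care is Step 2. One must make sure that the conjugation defining $\sigma^{\ast}$ (entrywise $\sqrt5\to-\sqrt5$) is the same one that defines ${\cal R}^{\ast}$, and that it carries the reflection for $\psi$ to the reflection for $\psi^{\ast}$ rather than to some other element. Both facts follow from the explicit formula in Step 2 and from $\sigma(g)$ having entries in ${\bf Q}(\sqrt5)$, which holds because the generators do. As a safeguard, the divisibility can be spot-checked by direct substitution for the forms $u_1$ and $u_1+u_2+u_3+u_4$.
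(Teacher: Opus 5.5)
Your proof is correct, but it takes a different route from the paper's.

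The paper proves divisibility directly for only one form, $u_1$. By construction $P_1=f(u_4,u_3,u_2,u_1)$ has the factor $u_1$, and $u_1^{\ast}(P_1,\dots,P_4)=P_1$. It then transports this along the group action via Lemma~\ref{lemma:lemma-H4-H4}. If $\psi({\tt u})=\lambda\,({\tt u}\sigma(g))_1$, then $\psi^{\ast}({\tt v})=\lambda^{\ast}({\tt v}\sigma^{\ast}(g))_1$. Hence $\psi^{\ast}({\tt P}({\tt u}))=\lambda^{\ast}P_1({\tt u}\sigma(g))$, which is divisible by $({\tt u}\sigma(g))_1=\lambda^{-1}\psi({\tt u})$. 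This tacitly needs every form in ${\cal R}$ to be a multiple of a $W(H_4)$-translate of $u_1$, i.e.\ all reflections of $H_4$ must be conjugate. That is true, since the Coxeter labels $5,3,3$ are odd, but the paper never says so.

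Your fixed-point argument shows that ${\tt P}$ maps the mirror of $\sigma(g)$ into the mirror of $\sigma^{\ast}(g)$. It needs no base case and no transitivity, only equivariance and the compatibility of Galois conjugation with reflections. So it works uniformly, even for groups with several conjugacy classes of reflections, and it reproves Lemma~\ref{lemma:reflection-reflection-ast}(i) verbatim.

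In return, the paper's route gives more information about the quotients. Each $G$ is, up to a scalar, the translate $q({\tt u}\sigma(g))$ of the single polynomial $q=P_1/u_1$, which is convenient for the product $\prod G_{\varphi}$ used afterwards.

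A small simplification of your Step~2: you do not need the orthogonal-reflection formula. Any reflection $\sigma(g)$ fixing $\{\psi=0\}$ pointwise, acting on row vectors, has the form $I-c^{T}y$ with $y\,c^{T}=2$, so $\sigma(g)c^{T}=-c^{T}$. Conjugating this identity gives $\sigma^{\ast}(g){c^{\ast}}^{T}=-{c^{\ast}}^{T}$ directly.
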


{\bf Proof}. Since $\varphi_1=u_1$ is contained in ${\cal R}$, it follows
that $\varphi_1^{\ast}=P_1$.
Since $P_1/u_1$ is a polynomial, the action of ${\cal W}^{\ast}(H_4)$
shows that for any $\varphi\in{\cal R}$,
$\varphi^{\ast}(P_1,P_2,P_3,P_4)/\varphi$ is a polynomial of ${\tt u}$.
[]

\vspace{5mm}

We put
$$
G_{\varphi}(u_1,u_2,u_3,u_4)=\varphi(P_1,P_2,P_3,P_4)/\varphi \quad(\varphi\in{\cal R}^{\ast}).
$$
Since  each $P_j$ is a homogeneous polynomial of degree 7, 
$G_{\varphi}$ is a homogeneous polynomial of degree 6.

\begin{lemma}
$$\frac{D^{\ast}(P_1,P_2,P_3,P_4)}{D({\tt u})}
=\prod_{\varphi\in{\cal R}}G_{\varphi^{\ast}}({\tt u}).
$$
\end{lemma}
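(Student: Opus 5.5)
The identity is essentially a regrouping of factors. By definition,
$$
D^{\ast}(P_1,P_2,P_3,P_4)=\prod_{\psi\in{\cal R}^{\ast}}\psi(P_1,P_2,P_3,P_4).
$$
The map $\varphi\mapsto\varphi^{\ast}$ is a bijection ${\cal R}\to{\cal R}^{\ast}$, since conjugation $\sqrt5\to-\sqrt5$ is an involution. We can therefore reindex this product over $\varphi\in{\cal R}$ with $\psi=\varphi^{\ast}$. By the preceding lemma and the definition of $G$,
$$
\varphi^{\ast}(P_1,P_2,P_3,P_4)=\varphi({\tt u})\,G_{\varphi^{\ast}}({\tt u}).
$$
Here $G_{\varphi^{\ast}}$ is understood as the polynomial quotient $\varphi^{\ast}(P_1,\dots,P_4)/\varphi$; this is the reading forced by the proof of the previous lemma, where $\varphi_1=u_1$ gives $\varphi_1^{\ast}(P)=P_1$, which is divisible by $u_1$. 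Taking the product over ${\cal R}$ gives $D^{\ast}(P_1,\dots,P_4)=D({\tt u})\prod_{\varphi\in{\cal R}}G_{\varphi^{\ast}}({\tt u})$. Dividing by $D({\tt u})$, which is a nonzero polynomial, yields the statement.

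The only real content is the divisibility $\varphi\mid\varphi^{\ast}(P_1,\dots,P_4)$ for every $\varphi\in{\cal R}$, and the orbit argument sketched in the previous proof can be made precise as follows. Every reflection of ${\cal W}(H_4)$ is conjugate to $\sigma(s_1)$, and for $g=\sigma(w)$ the hyperplane $\{\varphi=0\}$ is the image of $\{u_1=0\}$. So $\varphi({\tt u})$ is proportional to $({\tt u}g)_1$ for a suitable $w$, and the corresponding $\sigma^{\ast}(w)$ carries $v_1$ to a multiple of $\varphi^{\ast}({\tt v})$, because all matrix entries are conjugated simultaneously. Lemma~\ref{lemma:lemma-H4-H4} extends from the generators to all of $W(H_4)$ by multiplicativity, giving $P({\tt u}\sigma(w))={\tt v}\sigma^{\ast}(w)$. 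Hence $\varphi^{\ast}(P({\tt u}))$ equals, up to a nonzero constant, $P_1({\tt u}g)$. Since $u_1\mid P_1$ (visible from the definition, as $P_1=f(u_4,u_3,u_2,u_1)$ has the overall factor $u_1$), we get $({\tt u}g)_1\mid P_1({\tt u}g)$, that is, $\varphi\mid\varphi^{\ast}(P)$.

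The one point requiring care, and the expected obstacle, is bookkeeping. The same $w$ must produce matching proportionality constants, and the orientation and sign of each linear form in ${\cal R}$ must be matched with its conjugate in ${\cal R}^{\ast}$. Because the statement is an exact equality rather than one up to a constant, I would fix $G_{\varphi^{\ast}}$ as the exact quotient defined above, so that these constants are absorbed into the $G$'s and no normalisation issue arises. A degree check confirms consistency: the left side has degree $60\cdot7-60=360$, and the right side is a product of $60$ factors of degree $6$.
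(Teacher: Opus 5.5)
Your proposal is correct and is essentially the paper's proof. Like the paper, you rewrite $D^{\ast}(P_1,P_2,P_3,P_4)=\prod_{\varphi\in{\cal R}}\varphi^{\ast}(P_1,P_2,P_3,P_4)$ via the bijection $\varphi\mapsto\varphi^{\ast}$ and pull the factor $\varphi$ out of each term. Your orbit argument for $\varphi\mid\varphi^{\ast}(P_1,\dots,P_4)$, and your reading of $G_{\varphi^{\ast}}$ as the quotient by $\varphi$ rather than by $\varphi^{\ast}$ (the paper's literal definition divides by the wrong form), only make precise what the paper sketches in the preceding lemma.
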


{\bf Proof}.
By definition,
we have
$$
D({\tt u})=\prod_{\varphi\in{\cal R}}\varphi({\tt u})
$$
and
$$
D^{\ast}(P_1,P_2,P_3,P_4)=\prod_{\varphi\in{\cal R}}\varphi^{\ast}(P_1,P_2,P_3,P_4).
$$
Then the lemma follows.[]

\subsection{Jacobian matrices}

The purpose of this section is to compute the determinant of the matrix
$$
M=\frac{\partial(Y_2,Y_{12},Y_{20},Y_{30})}{\partial(u_1,u_2,u_3,u_4)}
$$
in two ways by using the chain rule of transformations.
It follows from the definition that
$$
M=\frac{\partial(P_1,P_2,P_3,P_4)}{\partial(u_1,u_2,u_3,u_4)}\frac{\partial(Y_2,Y_{12},Y_{20},Y_{30})}{\partial(P_1,P_2,P_3,P_4)}
=
\frac{\partial(Z_2,Z_{12},Z_{20},Z_{30})}{\partial(u_1,u_2,u_3,u_4)}\frac{\partial(Y_2,Y_{12},Y_{20},Y_{30})}{\partial((Z_2,Z_{12},Z_{20},Z_{30})}.
$$
It is known that
$$
\det\left(\frac{\partial(Z_2,Z_{12},Z_{20},Z_{30})}{\partial(u_1,u_2,u_3,u_4)})\right)=
\prod_{\varphi\in{\cal R}}\varphi=D(u_1,u_2,u_3,u_4).
$$
This is a formula concerning $D(u_1,u_2,u_3,u_4)$ for ${\cal W}(H_4)$.
$$
\det\left(\frac{\partial(Y_2,Y_{12},Y_{20},Y_{30})}{\partial(P_1,P_2,P_3,P_4)}\right)=
\prod_{\varphi\in{\cal R}^{\ast}}\varphi=D^{\ast}(P_1,P_2,P_3,P_4).
$$
On the other hand, it is straightforward to show that
$\det\left(\frac{\partial(P_1,P_2,P_3,P_4)}{\partial(u_1,u_2,u_3,u_4)}\right)$
coincides with $Z_2^2Z_{20}-Z_{12}^2$ up to a constant factor.
Since
$$
\begin{array}{ll}
&\det\left(\frac{\partial(P_1,P_2,P_3,P_4)}{\partial(u_1,u_2,u_3,u_4)}\right)
\det\left(\frac{\partial(Y_2,Y_{12},Y_{20},Y_{30})}{\partial(P_1,P_2,P_3,P_4)}\right)\\
=&
\det\left(\frac{\partial(Z_2,Z_{12},Z_{20},Z_{30})}{\partial(u_1,u_2,u_3,u_4)}\right)\det\left(\frac{\partial(Y_2,Y_{12},Y_{20},Y_{30})}{\partial(Z_2,Z_{12},Z_{20},Z_{30})}\right),\\
\end{array}
$$
we find that
$$
\det\left(\frac{\partial(Y_2,Y_{12},Y_{20},Y_{30})}{\partial(Z_2,Z_{12},Z_{20},Z_{30})}\right)
=c'(Z_2^2Z_{20}-Z_2^2)\prod_{\varphi\in{\cal R}^{\ast}}G_{\varphi}.
$$
for a non-zero constant $c'$.
In \cite{Ar}, through direct computation using a computer algebra system, 
the author verifies the above equation and determines the constant to be $c' = 2^{10} \cdot 3^2$.

\subsection{Prepotentials related with $H_4$}

We introduce two  prepotentials related to the group $W(H_4)$.
One is the polynomial prepotential denoted by $F_{H_4}$
and the other is $F_{H_4(9)}$ which 
is an analogue of the algebraic  potential  $F_{(H_3)'}$.

The polynomial potential $F_{H_4}$ is given in \cite{Du}, Lecture 4, Example 4.4:
\begin{equation}
\begin{array}{lll}
F_{H_4}&=&\frac{{x_1} {x_4}^2}{2}+{x_2} {x_3} {x_4}+\frac{32 }{22395255890625}x_1{}^{31}+
\frac{2}{15582375}{x_1}^{19}{x_2}^2+\frac{1}{72900}{x_1}^{13}{x_2}^3\\
&&+\frac{8 }{22275}{x_1}^{11}{x_3}^2
  +\frac{1}{810} {x_1}^9 {x_2}^2 {x_3}+\frac{1}{1080}{x_1}^7{x_2}^4
   +\frac{1}{15} {x_1}^5 {x_2} {x_3}^2+\frac{1}{18} {x_1}^3
   {x_2}^3 {x_3}\\
   &&+\frac{1}{240}{x_1}{x_2}^5+\frac{2}{3} {x_1}
   {x_3}^3.\\
   \end{array}
\end{equation}
Let $C_{H_4}$ be the $4\times 4$ matrix such that
$(C_{H_4})_{ij}=\partial_{x_i}\partial_{x_{5-j}}F_{H_4}$ and
$$
T_{H_4}=\frac{1}{30}(2x_1\partial_{x_1}C_{H_4}
+12x_2\partial_{x_{2}}C_{H_4}+20x_3\partial_{x_3}C_{H_4}+30x_4\partial_{x_4}C_{H_4}).
$$
Then $\Delta_{H_4}=\det T_{H_4}$ is called the discriminant of $F_{H_4}$.
Its concrete form is given by

{\small
$$
\begin{array}{ll}
&\Delta_{H_4}\\
=&65536x_1^{60} + 33177600x_1^{54}x_2 + 1492992000x_1^{48}x_2^{2 }- 
   596263680000x_1^{42}x_2^{3} \\
&- 35287682400000x_1^{36}x_2^{4} + 
   3697075604700000x_1^{30}x_2^{5 }- 251572211977500000x_1^{24}x_2^{6} \\
&- 
   2217376955010937500x_1^{18}x_2^{7} + 
 145866084150427734375x_1^{12}x_2^{8} \\
&- 
   2273083783917539062500x_1^{6}x_2^{9} + 397166535676406250000x_2^{10 }+ 
   995328000x_1^{50}x_3\\
& + 67184640000x_1^{44}x_2x_3 + 
 31744742400000x_1^{38}x_2^{2}x_3 + 
   684496008000000x_1^{32}x_2^{3}x_3 \\
&+ 
 483199443225000000x_1^{26}x_2^{4}x_3 + 
   27221132108362500000x_1^{20}x_2^{5}x_3 \\
&+ 
 92308984949390625000x_1^{14}x_2^{6}x_3 + 
   7938427422964218750000x_1^{8}x_2^{7}x_3 \\
&- 
 14562772974801562500000x_1^{2}x_2^{8}x_3 - 
   8062156800000x_1^{40}x_3^{2} \\
&+ 1666598976000000x_1^{34}x_2x_3^{2} - 
   350113330800000000x_1^{28}x_2^{2}x_3^{2} \\
&- 
 24924051459000000000x_1^{22}x_2^{3}x_3^{2} - 
   1549439818194375000000x_1^{16}x_2^{4}x_3^{2} \\
&- 
 34166128649298750000000x_1^{10}x_2^{5}
     x_3^{2} - 30964279911067968750000x_1^{4}x_2^{6}x_3^{2}\\
& - 
   32311612800000000x_1^{30}x_3^{3} - 585435405600000000x_1^{24}x_2x_3^{3} \\
&- 
   730675042254000000000x_1^{18}x_2^{2}x_3^{3} - 
 15354926805903750000000x_1^{12}x_2^{3}
     x_3^{3}\\
&- 576332772881540625000000x_1^{6}x_2^{4}x_3^{3} - 
   105911076180375000000000x_2^{5}x_3^{3} \\
&+ 
 146703225168000000000x_1^{20}x_3^{4} + 
   23134814500635000000000x_1^{14}x_2x_3^{4} \\
&- 
 109441445386387500000000x_1^{8}x_2^{2}
     x_3^{4} - 3353850745711875000000000x_1^{2}x_2^{3}x_3^{4} \\
&- 
   237763828304190000000000x_1^{10}x_3^{5} - 
 4942516888417500000000000x_1^{4}x_2
     x_3^{5} \\
&+ 7060738412025000000000000x_3^{6} \\
&- 
   3690562500
  x_1(512x_1^{38}x_2 + 34560x_1^{32}x_2^{2} + 14580000x_1^{26}x_2^{3} + 
        1673055000x_1^{20}x_2^{4} \\
&+ 12178856250x_1^{14}x_2^{5} + 
    1343549278125x_1^{8}x_2^{6} - 
        9267002437500x_1^{2}x_2^{7} + 27648x_1^{34}x_3\\
& - 
    12441600x_1^{28}x_2x_3 - 
        1522152000x_1^{22}x_2^{2}x_3 - 184823370000x_1^{16}x_2^{3}x_3 \\
&- 
        6995092162500x_1^{10}x_2^{4}x_3 - 29235898012500x_1^{4}x_2^{5}x_3 - 
        45664560000x_1^{18}x_2x_3^{2}\\
& - 1753755300000x_1^{12}x_2^{2}x_3^{2} - 
        212842120500000x_1^{6}x_2^{3}x_3^{2} - 430467210000000x_2^{4}x_3^{2} \\
&+ 
        1275458400000x_1^{14}x_3^{3} - 37732311000000x_1^{8}x_2x_3^{3 }- 
        3085015005000000x_1^{2}x_2^{2}x_3^{3} \\
&- 1090516932000000x_1^{4}x_3^{4})
  x_4 \\
&- 
   5978711250000(128x_1^{30} + 32400x_1^{24}x_2 + 8748000x_1^{18}x_2^{2} + 
        586389375x_1^{12}x_2^{3}\\
&+ 7750181250x_1^{6}x_2^{4} + 
    9964518750x_2^{5} + 
        972000x_1^{20}x_3 + 65610000x_1^{14}x_2x_3 \\
&+ 
    36536568750x_1^{8}x_2^{2}x_3 + 
        481618406250x_1^{2}x_2^{3}x_3 + 3936600000x_1^{10}x_3^{2} \\
&+ 
        1129312125000x_1^{4}x_2x_3^{2} + 1328602500000x_3^{3})x_4^{2} \\
&+ 
   5516201884394531250000x_1^{3}(2x_1^{6}x_2 + 135x_2^{2} + 108x_1^{2}x_3)
  x_4^{3} \\
&+ 
   2234061763179785156250000x_4^{4}.\\
\end{array}
$$
}

The following lemma is shown by direct computation.

\begin{lemma}
Let $D(u)$ be the product of linear forms in (\ref{equation:60-linear forms}).
Then by the identification
\begin{equation}
\left\{
\label{equation:map-x-Z}
\begin{array}{lll}
x_1 &=& Z_2, \\
x_2 &=& \frac{2}{45} (45 Z_{12} + 4 Z_2^6), \\
 x_3 &=& \frac{1}{405} (135 Z_{12} Z_2^4 + 8 Z_2^{10 }+ 405 Z_{20}), \\
   x_4 &=& \frac{1}{18225}(12150 Z_{12}^2 Z_2^3 + 2520 Z_{12} Z_2^9 + 104 Z_2^{15} + 
     4860 Z_2^5 Z_{20} - 18225 Z_{30}),\\
     \end{array}
     \right.
\end{equation}
$\Delta_{H_4}(x_1,x_2,x_3,x_4)$ coincides with ${\tilde D}(Z_2,Z_{12},Z_{20},Z_{30})$ (cf. 
 (\ref{equation:def-DFZ}))
 up to a constant factor.
\end{lemma}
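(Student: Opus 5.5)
The claim is a polynomial identity: after substituting (\ref{equation:map-x-Z}) into $\Delta_{H_4}(x_1,x_2,x_3,x_4)$, the result equals $c\,{\tilde D}(Z_2,Z_{12},Z_{20},Z_{30})$ for some non-zero constant $c$. I will verify it by direct computation in the weighted polynomial ring ${\bf C}[Z_2,Z_{12},Z_{20},Z_{30}]$, with weights $2,12,20,30$, and I will use structure to keep the computation small. The substitution (\ref{equation:map-x-Z}) is weighted homogeneous: $x_1,x_2,x_3,x_4$ have weights $2,12,20,30$, and each $x_k$ is a polynomial in the $Z$'s of the matching weight. Since $\Delta_{H_4}$ is weighted homogeneous of total weight $120$ (for example $x_1^{60}$, $x_3^6$, $x_4^4$), the substituted polynomial is weighted homogeneous of weight $120$. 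So is ${\tilde D}$; this is consistent with $D(u)^2$ having degree $120$.

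\textbf{Step 1.} Invert (\ref{equation:map-x-Z}) triangularly:
\begin{equation*}
Z_2=x_1,\quad Z_{12}=\tfrac{1}{2}x_2-\tfrac{4}{45}x_1^6,\quad Z_{20}=x_3-\tfrac{1}{3}Z_{12}x_1^4-\tfrac{8}{405}x_1^{10},
\end{equation*}
and $Z_{30}$ is obtained similarly from $x_4$. Then substitute these into ${\tilde D}$ and compare with $\Delta_{H_4}$. Working in the $x$-variables is preferable, because $\Delta_{H_4}$ is already expanded there.

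\textbf{Step 2.} Organise the comparison by powers of $x_4$, the variable of top weight. Both sides have degree $4$ in $x_4$: ${\tilde D}$ contains $1366875\,Z_{30}^4$, and $\Delta_{H_4}$ contains $2234061763179785156250000\,x_4^4$. Since $Z_{30}=-x_4+(\text{terms in }x_1,x_2,x_3)$, comparing the $x_4^4$ coefficients fixes the constant
\begin{equation*}
c=2234061763179785156250000/1366875 .
\end{equation*}
It then remains to match the coefficients of $x_4^3,x_4^2,x_4^1,x_4^0$. Each of these is a weighted homogeneous polynomial in $x_1,x_2,x_3$, and there are only finitely many monomials $x_1^ax_2^bx_3^d$ with $2a+12b+20d$ fixed. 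Each match is therefore a finite check of rational coefficients, which I will carry out with a computer algebra system, as the paper does for the other ``direct computation'' lemmas.

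\textbf{A conceptual cross-check.} One can also argue that the identity is forced. The Saito flat coordinates of $W(H_4)$ are weighted-homogeneous polynomials in $Z_2,\dots,Z_{30}$, and (\ref{equation:map-x-Z}) is presumably exactly the flat-coordinate change for which Dubrovin's $F_{H_4}$ is the potential. The discriminant of the Frobenius structure, $\det T_{H_4}$, is then (up to a constant) the determinant of the intersection form. By Saito's theory this determinant equals the square of the Jacobian $\det\bigl(\partial(Z)/\partial(u)\bigr)$, that is $D(u)^2\propto{\tilde D}$. A rigorous version would require justifying that (\ref{equation:map-x-Z}) is the flat-coordinate map, which is again a computation. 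So I treat this argument only as explanation and as a check on $c$, and rely on the explicit verification.

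\textbf{Main obstacle.} The hardest part is the bookkeeping of large integer coefficients in the $x_4^0$ part. Substituting $Z_{30}=-x_4+\dots$ into the degree-$\le 4$ terms of ${\tilde D}$ produces many cross terms that have to cancel. To control errors, I will first test the identity at several random rational points $(x_1,x_2,x_3,x_4)$, then do the symbolic expansion coefficient by coefficient.
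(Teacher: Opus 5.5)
Your proposal is correct and follows the same route as the paper, which proves this lemma only by direct (computer-algebra) verification of the polynomial identity; organizing the check by powers of $x_4$ and fixing the constant from the $x_4^4$ coefficients is a sensible way to carry it out. Your constant $c=2234061763179785156250000/1366875=2^4\cdot 3^{21}\cdot 5^{10}$ is also consistent with the $x_1^{60}$ coefficient: at $(x_1,x_2,x_3,x_4)=(1,0,0,0)$ one gets $(Z_2,Z_{12},Z_{20},Z_{30})=(1,-\tfrac{4}{45},\tfrac{4}{405},\tfrac{8}{6075})$ and ${\tilde D}=2^{12}/(3^{21}5^{10})$, against $\Delta_{H_4}=2^{16}$.
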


There are  seven algebraic Frobenius potentials related $H_4$, named $H_4(j)\>(j=1,2,3,4,6,7,9)$
(cf. \cite{Se}).
Among others we focus our attention on the algebraic potential $F_{H_4(9)}$ which is given as follows:
{\footnotesize
$$
\begin{array}{lll}
F_{H_4(9)}&=&\frac{1}{2}t_1 t_4^2+t_2 t_3 t_4 -\frac{ 189 t_1^6}{800 w_0^5} -\frac{ 63 t_1^4 t_2}{40 w_0}+
\frac{1}{50}t_1 t_2^3 - \frac{189}{32} t_1^5 w_0^2
 - \frac{63}{50} t_1^2 t_2^2 w_0^3 - 
   \frac{273}{16} t_1^3 t_2 w_0^6\\
   && -\frac{2}{25} t_2^3 w_0^7 -\frac{3017}{52} t_1^4 w_0^9
 - \frac{217}{100} t_1 t_2^2 w_0^{10} - \frac{1988}{65} t_1^2 t_2 w_0^{13 }- 
  \frac{53865}{416} t_1^3 w_0^{16} - \frac{308}{221} t_2^2 w_0^{17}\\
  && - \frac{20321}{780}t_1 t_2 
  w_0^{20} - \frac{50246}{345} t_1^2 w_0^{23} - \frac{4984}{585} t_2 w_0^{27} - 
   \frac{16324}{195} t_1 w_0^{30} - \frac{9072}{481} w_0^{37},\\
   \end{array}
   $$
}   
where $w_0$ is an algebraic function of
$t_1,t_2,t_3$ defined by the equation
\begin{equation}
\label{equation:def-eq-w0-t3}
\frac{3}{20} {t_1}^2-{t_3}w_0^2+\frac{1}{5}{t_2} w_0^4+\frac{8}{5} {t_1}
   w_0^7+w_0^{14}=0.
 \end{equation}
Similarly to the case $F_{H_4}$, we can define 
the discriminant $\Delta_{H_4(9)}=\Delta_{H_4(9)}(t_1,t_2,t_3,t_4)$
of $F_{H_4(9)}$.
It follows from the equation (\ref{equation:def-eq-w0-t3}) that
$t_3$ is regarded as a function of $t_1,t_2,w_0$
defined by
\begin{equation}
\label{equation:def-eq-t3-w0}
t_3=(\frac{3}{20} {t_1}^2+\frac{1}{5}{t_2} w_0^4+\frac{8}{5} {t_1}
   w_0^7+w_0^{14})/w_0^2.
 \end{equation}
Eliminating  $t_3$ in $\Delta_{H_4(9)}$  by this relation, we find that
$\Delta_{H_4(9)}$ is regarded as a function of $t_1,t_2,t_4,w_0$.
So we put
$$
\Psi_{H_4(9)}(t_1,t_2,t_4,w_0)=\Delta_{H_4(9)}(t_1,t_2,(\frac{3}{20} {t_1}^2+\frac{1}{5}{t_2} w_0^4+\frac{8}{5} {t_1}
   w_0^7+w_0^{14})/w_0^2,t_4).
   $$
Then the concrete form of ${\tilde \Psi}_{H_4(9)}=72\cdot 10^6\cdot w_0^{10}\Psi_{H_4(9)}$
is given by
{\small
$$
\begin{array}{ll}
&{\tilde \Psi}_{H_4(9)}\\
=&23147208t_1^{10} - 282910320t_1^8t_2w_0^4 + 4354560t_1^5t_2^3w_0^5 + 
   11010105225t_1^9w_0^7 
- 657742680t_1^6t_2^2w_0^8\\
& + 
 45964800t_1^3t_2^4w_0^9 + 
   204800t_2^6w_0^{10} 
 - 
 3237513300t_1^7t_2w_0^{11} - 
   1343835360t_1^4t_2^3w_0^{12}\\
& + 33331200t_1t_2^5w_0^{13} + 
   131363382150t_1^8w_0^{14} - 12777322320t_1^5t_2^2w_0^{15} + 
   290720640t_1^2t_2^4w_0^{16}\\
& + 106165325700t_1^6t_2w_0^{18} + 
   1244196800t_1^3t_2^3w_0^{19} - 135657984t_2^5w_0^{20} + 
   1257858456750t_1^7w_0^{21} \\
&+ 61206801600t_1^4t_2^2w_0^{22} - 
   4971993600t_1t_2^4w_0^{23} + 840446451600t_1^5t_2w_0^{25} - 
   82051435200t_1^2t_2^3w_0^{26} \\
&+ 3980430450500t_1^6w_0^{28} - 
   589067808000t_1^3t_2^2w_0^{29} - 3231513600t_2^4w_0^{30} - 
   1325951907000t_1^4t_2w_0^{32}\\
&- 97451760000t_1t_2^3w_0^{33}+ 
   1846006191750t_1^5w_0^{35} - 1134341964000t_1^2t_2^2w_0^{36}\\
& - 
   5338498305000t_1^3t_2w_0^{39} - 29148672000t_2^3w_0^{40} - 
   7358004701250t_1^4w_0^{42}\\
& - 665874720000t_1t_2^2w_0^{43} - 
   5073299280000t_1^2t_2w_0^{46} - 11947799850000t_1^3w_0^{49 }\\
&- 
   128701440000t_2^2w_0^{50}
 - 1979208000000t_1t_2w_0^{53} - 
   7510935600000t_1^2w_0^{56} \\
&- 282240000000t_2w_0^{60} - 
 2194416000000t_1w_0^{63} - 
   248371200000w_0^{70} \\
&+ 
   4200t_4
  w_0^6(629370t_1^7 - 51840t_1^4t_2^2w_0
+ 1346058t_1^5t_2w_0^4 - 
        133440t_1^2t_2^3w_0^5 + 14189175t_1^6w_0^7 \\
&- 
    133200t_1^3t_2^2w_0^8 - 
        19712t_2^4w_0^9 
+ 14030100t_1^4t_2w_0^{11} - 
    102880t_1t_2^3w_0^{12} + 
        63590730t_1^5w_0^{14} \\
&+ 4470000t_1^2t_2^2w_0^{15} + 
    65580200t_1^3t_2w_0^{18} 
- 
        228800t_2^3w_0^{19} + 237288625t_1^4w_0^{21} + 
    2673600t_1t_2^2w_0^{22} \\
&+ 
        77038500t_1^2t_2w_0^{25} + 358447250t_1^3w_0^{28}
 - 
    489600t_2^2w_0^{29} + 
        29232000t_1t_2w_0^{32} + 240345000t_1^2w_0^{35}\\
& + 
    2016000t_2w_0^{39} + 
        70560000t_1w_0^{42} 
+ 6720000w_0^{49})\\
& + 
 12000t_4^2w_0^5(6804t_1^5 + 109620t_1^3t_2w_0^4 - 
        640t_2^3w_0^5 + 207900t_1^4w_0^7 + 68880t_1t_2^2w_0^8 \\
&+ 
        804300t_1^2t_2w_0^{11} + 1623475t_1^3w_0^{14 }- 
    5040t_2^2w_0^{15} + 
        384300t_1t_2w_0^{18 }+ 1008000t_1^2w_0^{21}\\
&
 - 100800t_2w_0^{25} - 
        693000t_1w_0^{28} - 504000w_0^{35}) \\
&- 
 25200000t_4^3w_0^{11}(15t_1^2 - 4t_2w_0^4 - 50t_1w_0^7 - 
        40w_0^{14})\\
&+ 72000000t_4^4w_0^{10}\\
\end{array}
$$
}

As is shown in \cite{Se}, \S5.3, {\bf (Case 1)}, by the transformation
\begin{equation}
\label{equation:map-t-x}
\left\{\begin{array}{lll}
t_1&=&\frac{1}{6}x_1(4{x_1}^6+315 x_2),\\
t_2&=&\frac{3675}{8}({x_1}^4x_2+18x_3),\\
t_4&=&\frac{46305}{32}(5{x_1}^3{x_2}^2+8{x_1}^5x_3+90x_4),\\
w_0&=&-x_1,\\
\end{array}
\right.
\end{equation}
$\Psi_{H_4(9)}(t_1,t_2,t_4,w_0)$
coincides with
$\Delta_{H_4}(x_1,x_2,x_3,x_4)$
up to a constant factor.

\begin{remark}
There is a mistake in the definition of the transformation 
given in \cite{Se}, \S5.3.
``$y_1=\frac{1}{6}x_1(4{x_1}^6+31 x_2)$'' should  read ``$y_1=\frac{1}{6}x_1(4{x_1}^6+315 x_2)$''.
\end{remark}

Composing the transformations  (\ref{equation:map-t-x}),
(\ref{equation:map-x-Z}), we have the following.

\begin{lemma}
\label{lemma:map-t-x-Z,mapZ-x-t}
(i) By the transformation
\begin{equation}
\label{equation:map-t-x-Z}
\left\{
\begin{array}{lll}
t_1 &=& 5 Z_2 (21 Z_{12} + 2 Z_2^6), \\
 t_2 &=& \frac{245}{4} (60 Z_{12} Z_2^4 + 4 Z_2^{10} + 135 Z_{20}), \\
   t_4 &=& \frac{343}{16} (5400 Z_{12}^2 Z_2^3 + 1260 Z_{12} Z_2^9 + 56 Z_2^{15} + 
     2160 Z_2^5 Z_{20} - 6075 Z_{30}),\\
      w_0 &=& -Z_2,\\
\end{array}
\right.
\end{equation}
$\Psi_{H_4(9)}(t_1,t_2,t_4,w_0)$
coincides with
${\tilde D}(Z_2,Z_{12},Z_{20},Z_{30})$
up to a constant factor.

(ii)
The inverse of
(\ref{equation:map-t-x-Z}) is given by
\begin{equation}
\label{equation:map-Z-x-t}
\left\{
\begin{array}{lll}
Z_2 &=& -w_0,\\
 Z_{12} &=& (-t_1 - 10 w_0^7)/(105 w_0), \\
  Z_{20} &=&\frac{4}{33075} (t_2 + 35 t_1 w_0^3 + 105 w_0^{10}), \\
     Z_{30} &=& -\frac{4}{10418625} (20 t_4 + 210 t_1^2 w_0 + 112 t_2 w_0^5 + 2975 t_1 w_0^8 + 
          5320 w_0^{15}).\\
          \end{array}\right.
          \end{equation}          
\end{lemma}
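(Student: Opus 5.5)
The plan for (i) is to compose the two transformations already established and then chain the two ``coincides up to a constant factor'' statements. Concretely, I will substitute the identification (\ref{equation:map-x-Z}) for $x_1,x_2,x_3,x_4$ into the right-hand sides of (\ref{equation:map-t-x}) and simplify. For $t_1$, using $x_1=Z_2$ and $x_2=2Z_{12}+\frac{8}{45}Z_2^6$, we get $315x_2=630Z_{12}+56Z_2^6$. Hence
$$t_1=\tfrac16 Z_2(60Z_2^6+630Z_{12})=5Z_2(21Z_{12}+2Z_2^6),$$
as claimed.

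The same mechanical substitution handles the remaining coordinates. For $t_2$ it only needs $x_1^4x_2$ and $x_3$. For $t_4$ it needs $5x_1^3x_2^2+8x_1^5x_3+90x_4$; here $x_2^2$ produces the $Z_{12}^2Z_2^3$ and $Z_{12}Z_2^9$ terms, and the overall factor $\frac{46305}{32}=\frac{343}{16}\cdot\frac{135}{2}$ has to be tracked carefully. Finally $w_0=-x_1=-Z_2$. This yields (\ref{equation:map-t-x-Z}).

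For the discriminant assertion in (i), I will use two facts. By \cite{Se} (recalled above), $\Psi_{H_4(9)}(t_1,t_2,t_4,w_0)=c_1\Delta_{H_4}(x_1,x_2,x_3,x_4)$ under (\ref{equation:map-t-x}). By the preceding lemma, $\Delta_{H_4}(x)=c_2\tilde D(Z)$ under (\ref{equation:map-x-Z}). Composing gives $\Psi_{H_4(9)}=c_1c_2\tilde D$ under (\ref{equation:map-t-x-Z}).

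One point needs care. $\Psi_{H_4(9)}$ is only a rational function: it equals $\tilde\Psi_{H_4(9)}/(72\cdot 10^6 w_0^{10})$. Since $w_0=-Z_2$ is not identically zero, the identities are identities in the field ${\bf C}(Z_2,Z_{12},Z_{20},Z_{30})$, and composing them is legitimate. The constant $c_1c_2$ is nonzero because both $c_1$ and $c_2$ are.

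For (ii), I will exploit the triangular structure of (\ref{equation:map-t-x-Z}). The map determines each new variable linearly, given the previous ones:

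\begin{itemize}
\item $w_0=-Z_2$ gives $Z_2=-w_0$.
\item $t_1=105Z_2Z_{12}+10Z_2^7$ is linear in $Z_{12}$ with coefficient $105Z_2=-105w_0$, so $Z_{12}=(-t_1-10w_0^7)/(105w_0)$.
\item $t_2$ is linear in $Z_{20}$ with constant coefficient $\frac{245\cdot135}{4}$. Substituting the known $Z_2,Z_{12}$ and solving gives $Z_{20}=\frac{4}{33075}(t_2+35t_1w_0^3+105w_0^{10})$, after clearing the $1/w_0$ coming from $Z_{12}$ against the factor $Z_2^4$.
\item $t_4$ is linear in $Z_{30}$ with coefficient $-\frac{343\cdot 6075}{16}$, and solving gives the stated $Z_{30}$.
\end{itemize}

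As a final check, I will substitute (\ref{equation:map-Z-x-t}) back into (\ref{equation:map-t-x-Z}) and verify that the identity results. This shows the two maps are mutually inverse birational maps, regular wherever $w_0\neq0$.

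There is no conceptual obstacle. The only error-prone step is the $t_4$/$Z_{30}$ computation. There the square $x_2^2$ and the substitution of $Z_{12}$, which carries a $1/w_0$, must combine so that all negative powers of $w_0$ cancel except through the terms $210t_1^2w_0$ and so on. I would check this step by computer algebra, as done elsewhere in the paper.
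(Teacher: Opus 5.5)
Your proposal is correct and is essentially the paper's own argument: the paper obtains (\ref{equation:map-t-x-Z}) by composing (\ref{equation:map-t-x}) with (\ref{equation:map-x-Z}) and chaining the two ``up to a constant factor'' statements, and (ii) is the triangular inversion you describe. Your coefficients check out, including $\frac{46305}{32}=\frac{343}{16}\cdot\frac{135}{2}$ and the resulting formulas for $Z_{20}$ and $Z_{30}$. One small correction to your last paragraph: in the $Z_{30}$ step no cancellation of negative powers of $w_0$ between different terms is needed, since each term such as $Z_{12}^2Z_2^3$ or $Z_{12}Z_2^9$ is already polynomial in $t_1,w_0$ once $Z_2=-w_0$ is substituted.
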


Combining Theorem \ref{theorem:formula-Y-Z} and (\ref{equation:map-Z-x-t}),
we can obtain formulas for the expression of $Y_j\>(j=2,12,20,30)$
as functions of  $t_1,t_2,t_4,w_0$.
In particular we find that
$Y_2,\>w_0^7Y_{12},\>w_0^{10}Y_{20},\>w_0^{15}Y_{30}$
are polynomials of $t_1,t_2,t_4,w_0$.
In spite that it is the final goal to show the 
concrete expressions of $Y_2,\>w_0^7Y_{12},\>w_0^{10}Y_{20},\>w_0^{15}Y_{30}$
as polynomials,
we abandon to include
their concrete forms in this paper 
by the reason that their expressions are very lengthy.


\end{document}